
\documentclass{ipi}
\usepackage{
amsmath, amsthm, amssymb, graphicx, commath, pgfplots, pst-solides3d, inputenc, float}
\newcommand{\no}[1]{#1}

\renewcommand{\no}[1]{}  \newcommand{\upDelta}{\Delta} 

\no{\usepackage{times}\usepackage[
subscriptcorrection, slantedGreek,
nofontinfo]{mtpro}
\renewcommand{\Delta}{\upDelta}
}



 \usepackage[colorlinks=true]{hyperref}
\hypersetup{urlcolor=blue, citecolor=red}

  \textheight=8.2 true in
   \textwidth=5.0 true in
    \topmargin 30pt
     \setcounter{page}{1}



\numberwithin{equation}{section}%

\newtheorem{theorem}{Theorem}[section]
\newtheorem{proposition}{Proposition}[section]
\newtheorem{lemma}{Lemma}[section]

\newtheorem{corollary}{Corollary}[section]

{\theoremstyle{definition}
\newtheorem{definition}{Definition}[section]
\newtheorem{remark}{Remark}[section]
\newtheorem{example}{Example}[section]}

\DeclareMathOperator{\supp}{supp}

\DeclareMathOperator{\WF}{WF}
\DeclareMathOperator{\WFA}{WF_A}
\DeclareMathOperator{\curl}{curl}

\DeclareMathOperator{\ext}{ext}
\DeclareMathOperator{\const}{const}

\newcommand{\eps}{\varepsilon}
\newcommand{\R}{{\bf R}}
\newcommand{\e}{{\bf e}}
\newcommand{\Id}{\mbox{Id}}

\newcommand{\be}[1]{\begin{equation}\label{#1}}
\newcommand{\ee}{\end{equation}}

\renewcommand{\d}{\mathrm{d}}
\renewcommand{\r}[1]{(\ref{#1})}
\renewcommand{\i}{\mathrm{i}}

\title[Support theorem for the Light-Ray transform  on Minkowski spaces]{Support theorem for the Light-Ray transform of vector fields on Minkowski spaces}

\author[Siamak RabieniaHaratbar]{}

\subjclass{44A12, 46F12, 53C65}

 \keywords{ Light-Ray transforms, Minkowski time-spaces, Helgason's type support theorem, microlocal analysis, Fourier analysis, analytic wave front set, stationary phase method, analytic continuation, partial data, Dirichlet-to-Neumann map}

 \email{srabieni@purdue.edu}

\thanks{Partly supported by NSF Grants DMS~1301646 and DMS~1600327}

\begin{document}
\maketitle

\centerline{\scshape Siamak RabieniaHaratbar}
\medskip
{\footnotesize
 \centerline{Department of Mathematics, Purdue University}
 \centerline{West Lafayette, IN 47907, USA}
 }

 \bigskip

 \centerline{(Communicated by Allan Greenleaf)}

\begin{abstract}
   We study the Light-Ray transform of integrating vector fields on the Minkowski time-space $\R^{1+n}$, $n\ge 2$, with the Minkowski metric. We prove a support theorem for vector fields vanishing on an open set of light-like lines. We provide examples to illustrate the application of our results to the inverse problem for the hyperbolic Dirichlet-to-Neumann map.
\end{abstract}

\section{Introduction}

	Let $(M,g)$ be a Lorentzian manifold of dimension $1+n$, $n\ge 2$, with a Lorentzian metric $g$ of signature $(-,+,...,+)$. Given a weight $\kappa \in C^\infty (M \times S^{n-1})$,
	in general, the weighted Light-Ray transform of a vector field $f$ is defined by:
\be{1}
	L_\kappa f(\gamma)= \int \kappa(\gamma(s),\gamma^{\prime}(s)) f_i(\gamma(s)) {\gamma^\prime}^i(s)  \ ds,
\ee
	where $\gamma=\gamma_{x,\theta}$ is the family of future pointing light-like geodesics (null-geodesics) on $M$ in the direction of $(1,\theta)$, $\theta \in S^{n-1}$. We choose a certain parametrization for the family of light-like geodesics $\gamma$ and require the weight function $\kappa$ to be positively homogeneous of degree zero in its second variable. The homogeneity of $\kappa$ makes the parameterization of light-like geodesics independent.

	Light-ray transform has been attracting a growing interest recently, due to its wide range of applications. One major application of this transform is in the study of hyperbolic equations with time-dependent coefficients to recover the lower order terms from boundary or scattering information, see, e.g., \cite{40, 32, 31, 48} \cite{
	 49, 33, 4, 15} and also recovering the lower order terms of time-independent hyperbolic equations \cite{3,26}.

	In the case where $f$ is a function and supported in the cylinder $\R\times B(0,R)$ with tempered growth in the Minkowski space, Stefanov in \cite{39} has shown that $Lf$ determines $f$ uniquely. The fact that $Lf$ recovers the Fourier transform $\hat{f}$ of $f$ (w.r.t. all variables) in the space-like cone $|\tau| < |\xi|$ in a stable way, is used to show that the potential in the wave equation is uniquely determined by the scattering data. Moreover, since $\hat{f}(\tau,\xi)$ is analytic in the $\xi$ variable (with values distributions in the $\tau$ variable), then one can fill in the missing cone by analytic continuation in the $\xi$ variable. In a recent work by Stefanov \cite{39}, analytic microlocal methods are applied to show support theorems and injectivity of $L_\kappa$ for analytic metrics and weights (on an analytic manifolds $M$) for functions. In particular, the results in \cite{40} are generalized to a local data and independent of the tempered growth for large $t$.

	Analytic microlocal method were already used in many works. Boman and Quinto in \cite{6, 7} proved support theorems for Radon transforms with flat geometry and analytic weights, see also \cite{30}. For related results using same techniques, we refer to \cite{22, 23} where the support theorem is proved on simple analytic manifolds, see also \cite{11}. Uhlmann and Vasy in \cite{46} used the scattering calculus to prove a support theorem in the Riemannian case near a strictly convex point of a hypersurface in dimensions $n \ge 3$ without the analyticity condition, see also \cite{42, 43, 44}.

	Analytic and non-analytic microlocal analysis have been used to prove the injectivity and stability estimates for tensor fields of order two and higher. For the tensor fields of order two, Pestov and Uhlmann in \cite{28} proved the unique recovery of Riemannian metric on a compact and simple Riemannian surface with boundary, see also Sharafautdinov \cite{37} for related results. More general results on injectivity up to potential fields of the geodesic ray transform for tensor fields of any order on Riemannian manifold can be found in \cite{27}. In \cite{43, 44}, a generic s-injectivity up to potential fields and a stability estimate are established on a compact Riemannian manifold $M$ with non-necessarily convex boundary and with possible conjugate points. Microlocal method for tomographic problems is used to detect singularities of the Lorentzian metric of the Universe using measurements of the Cosmic Microwave Background (CMB) radiation in \cite{24}. In \cite{25}, it is described that which singularities are visible and which cannot be detected. In \cite{23}, a Helgason's type of support theorem is proved for the geodesic ray transform of symmetric 2-tensor fields on a Riemannian manifold (with boundary) with a real-analytic metric $g$. It is shown that the tensor field can be recovered up to a potential field. In \cite{18}, authors studied the problem of recovery of a covector field on simple Riemannian manifold with weight. Under some condition on weight, the recovery up to potential field and uniqueness are shown. See also \cite{10}, for the inversion of three dimensional $X$-ray transform of symmetric tensor fields of any order with sources on a curve.

	Our main goal in this paper is to study the local and analytic microlocal invertibility of the operator $L$ acting on vector fields on the Minkowski time-space $\R^{1+n}$ when the weight $\kappa$ is simply one. We study the following operator $L$:
\be{2}
	Lf(x,\theta)= \int f(s,x+s\theta)\cdot (1,\theta) \ ds,
\ee
	where $\gamma=\gamma_{x,\theta}=(s,x+s\theta)$ is the family of future pointing light-like lines (light-rays) on the Minkowski time-space $\R^{1+n}$ in the direction of $(1,\theta)$, with $|\theta|=1$. Note that above operator is a special case of the operator defined in \r{1}.

	The main novelty of our work is that $Lf$ is known only over an open set of light-like lines, $\Gamma$, on the Minkowski time-space $\R^{1+n}$ (the Incomplete data case). Our results can be considered as a Helgason's type support theorem. The global invertibility (injectivity) of the operator $L$ (the Complete data case) up to potential fields is already established, see for example \cite{10, 50}.

	We generalize the method used in \cite{39} to study the stable recovery of the analytic wave front set of vector field $f$ instead of functions, and prove a support theorem in the Minkowski time-space $\R^{1+n}$. To prove our results, we apply the analytic stationary phase approach by Sj\"ostrand \cite{38} already used by the Stefanov \cite{39} and Uhlmann in \cite{44}, see also \cite{11, 22, 23}.

	This paper is organized as follow: Section one is an introduction. In section two we state some definitions and our main result. Fourier analysis of the light-ray transform $L$ in the Minkowski time-space $\R^{1+n}$ is studied in section three. In section four, we first introduce the notion of Analytic Wave Front Set of a vector-valued distribution $f$. We then consider the partial data and apply analytic microlocal analysis argument and stationary phase method to recover the analytic wave front set of vector field $f$. In section five, we state an analytic continuation result and we prove our main theorem. Our examples in the last section illustrate how our results imply the inverse recovery of a smooth potential field for the hyperbolic Dirichlet-to-Neumann map.

\section{Main result}
	We first state some definitions and a proposition which are necessary for our main result.

\begin{definition}
	We call a vector $u = (u_0 ,u^\prime)$ space-like if $|u_0| < |u^\prime|$. Vectors with $|u_0| > |u^\prime|$ are called time-like. Light-like vectors are those for which we have $|u_0| = |u^\prime|$.
\end{definition}

\begin{definition}
	We say the set $K$ is light-like convex if for any two pints in $K$, the light-like geodesic connecting them lies in $K$.
\end{definition}

\begin{definition}
	Let $K$ be a subset of Minkowski time-space $\R^{1+n}$. We say $K$ expands with a speed less than one if
	\begin{align*}
		K \subset \lbrace (t,x):|x|\leqslant C|t|+R \rbrace, \quad \text{ for some \quad $0<C<1$ , $R > 0$}.
	\end{align*}
\end{definition}

\begin{remark}
	\normalfont
	Definition 2.3 allows us to integrate over a compact interval. In fact, if the $\supp f$ in such a set expands with speed less than one, then the operator defined by \r{2} is integrating over a compact set including $(x_0,\theta_0) \in \R^n \times S^{n-1}$. In other words, the integral of $f$ and $\chi f$ have the same light-ray transform near $(x_0,\theta_0)$, where  the function $\chi$ is a smooth cut-off with property $\chi=1$ in a neighborhood of $(x_0,\theta_0)$.
\end{remark}

	From now on, we study the operator defined by $\r{2}$. We know that, any three-dimensional vector field $f=(f_0,f_1,f_2)$, has a three-dimensional $\curl f$. In other words, one may work with the $\curl f$ to do the analytic recovery of the analytic wave front set. This, however, is not the case for any vector field $f$ with dimension $n>3$ as the generalized $\curl f$, $\d f$, does not have the same dimension as the vector field $f$ does. This motivates us to introduce an appropriate operator where it forms an $n$-dimensional parametrized vector field with all the necessary components of $\d f$ for the analytic recovery process. We now state our first proposition.

\begin{proposition}
	Let $f=(f_0,f_1,\dots,f_n) \in \mathcal{C}^1(\R^{1+n},\mathbb{C}^{1+n})$ be such that $|f|$ and $|\partial f_i /\partial x^i|$ are bounded by $C(1+|x|)^{-1-\epsilon}$ with some $\epsilon>0$ and constant $C>0$. Then for any $(x,\theta)\in (\R^n \times S^{n-1})$ and $v \in \R^{n},$
\be{3}
	(v.\nabla_x)Lf(x,\theta)=\int_{\gamma_{x,\theta}} \tilde{f}_v(x) \cdot (1, \theta) \ ds,
\ee
	where $\tilde{f}_v(x)=(\tilde{f}_{0_v},\tilde{f}_{1_v},\dots, \tilde{f}_{n_v})(x)\in\mathcal{S}(\R^{1+n})$ is a parametrized vector field with
$$
	\tilde{f}_{i_v} (x)=\sum_{0\leq j \leq n} f_{ij} (x)(0,v)^j _{(i)}=\sum_{0\leq j \leq n} (\partial_jf_i-\partial_if_j)(x)(0,v)^j _{(i)}, \quad i=1,2,\dots, n.
$$
	Here by $(0,v)_{(i)},$ we mean the $i$-th component of $(0,v)$ is excluded.
\end{proposition}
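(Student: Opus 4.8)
The plan is to reduce \r{3} to two elementary facts: that one may differentiate \r{2} under the integral sign in the $x$-variable, and that the Light-Ray transform annihilates space-time gradients; the bridge between them is a one-line algebraic splitting of $\partial_v f$. Concretely, I would fix $v\in\R^n$, write $\partial_v=v\cdot\nabla_x=\sum_{k=1}^n v^k\partial_{x^k}$, and observe that $x$ enters \r{2} only through the spatial argument $x+s\theta$. Using the hypotheses that $|f|$ and the first derivatives of $f$ are $O\big((1+|x|)^{-1-\epsilon}\big)$, together with the bound $|(s,x+s\theta)|\ge|s|$ (so that $(1+|s|)^{-1-\epsilon}$ is integrable in $s$), the $x$-derivative of the integrand is dominated, locally uniformly in $x$, by an $s$-integrable function; hence $x\mapsto Lf(x,\theta)$ is $C^1$ and
$$(v\cdot\nabla_x)Lf(x,\theta)=\int (\partial_v f)(s,x+s\theta)\cdot(1,\theta)\,\d s=L(\partial_v f)(x,\theta),$$
where $\partial_v f=(\partial_v f_0,\dots,\partial_v f_n)$ with $\partial_v f_i=\sum_{k=1}^n v^k\partial_k f_i$.

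Next comes the algebraic heart of the matter. I would set $\psi=\psi_v:=\sum_{j=1}^n v^j f_j$ and let $\nabla\psi=(\partial_0\psi,\partial_1\psi,\dots,\partial_n\psi)$ be its gradient in all $1+n$ variables. Adding and subtracting $\sum_j v^j\partial_i f_j$ gives, for every $i\in\{0,1,\dots,n\}$,
$$\partial_v f_i=\sum_{j=1}^n v^j\partial_j f_i=\sum_{j=1}^n v^j(\partial_j f_i-\partial_i f_j)+\partial_i\Big(\sum_{j=1}^n v^j f_j\Big)=\tilde f_{i_v}+(\nabla\psi)_i,$$
since $\sum_{j=1}^n(\partial_j f_i-\partial_i f_j)v^j$ is exactly $\tilde f_{i_v}$ once one notes that $(0,v)^0=0$ and $f_{ii}=0$ (this, incidentally, also pins down $\tilde f_{0_v}$ as the $i=0$ instance of the displayed formula). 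In short, $\partial_v f=\tilde f_v+\nabla\psi$ as vector fields on $\R^{1+n}$.

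Finally I would check that $L$ kills exact fields: for any $\psi\in C^1$ decaying along light-rays,
$$L(\nabla\psi)(x,\theta)=\int\Big(\partial_0\psi+\sum_{k=1}^n\theta_k\partial_k\psi\Big)(s,x+s\theta)\,\d s=\int\frac{\d}{\d s}\big[\psi(s,x+s\theta)\big]\,\d s=0,$$
the boundary terms vanishing because $|\psi(s,x+s\theta)|\le C(1+|s|)^{-1-\epsilon}\to0$ as $s\to\pm\infty$, which is the decay hypothesis on $|f|$. Combining the three steps, $(v\cdot\nabla_x)Lf=L(\partial_v f)=L(\tilde f_v)+L(\nabla\psi)=L(\tilde f_v)$, which is precisely \r{3}; the regularity and decay of $\tilde f_v$ are inherited from the first derivatives of $f$. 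The rest is linear algebra and the fundamental theorem of calculus, so the only points requiring genuine care are the domination estimate legitimizing differentiation under the integral and the vanishing of the boundary terms in the last step — that is, making sure the stated growth bounds are actually enough, and keeping the index bookkeeping (in particular the ``excluded index'' convention and the $i=0$ component) straight.
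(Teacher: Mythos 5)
Your proof is correct and follows essentially the same route as the paper: the paper likewise differentiates under the integral and subtracts the Fundamental Theorem of Calculus identity $\int \frac{\d}{\d s}\big[f(s,x+s\theta)\cdot(0,v)\big]\,\d s=0$, which is exactly your observation that $L$ annihilates $\nabla\psi$ with $\psi=f\cdot(0,v)$, and then rearranges the integrand into $\tilde f_v\cdot(1,\theta)$. The only differences are cosmetic: you carry out the algebra in general dimension via the identity $\partial_v f=\tilde f_v+\nabla\psi$ and explicitly justify the differentiation under the integral and the vanishing boundary terms, whereas the paper expands componentwise for $n=3$ and leaves those analytic points implicit.
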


\begin{proof}
	We show this result for $n=3$. The proof for higher dimension is analogous. Let $f\in \mathcal{C}^1(\R^{1+3},\mathbb{C}^{1+3})$ and fix $(1,\theta) \in \R \times S^{n-1}$. For $v \in \R^3$, we take the directional derivative of the operator $Lf$. Therefore,
\begin{align*}
	(v.\nabla_x)Lf(x,\theta)=\int_R (v.\nabla_x)f(s,x+s\theta).(1,\theta)ds.
\end{align*}
	On the other hand, by the Fundamental Theorem of Calculus,
\begin{align*}
	\int_R \frac{d}{ds}[f(s,x+s\theta).(0,v)] \ ds =0.
\end{align*}
	Subtracting above identities, we have
\begin{align*}
	(v.\nabla_x)Lf(x,\theta)=\int_R ((0,v).\nabla_z)f(s,x+s\theta).(1,\theta)- \frac{d}{ds}[f(s,x+s\theta).(0,v)]\ ds.
\end{align*}
	Note here that we used $z=(t,x)\in \R^{1+n}$ to balance the dimension of the two terms on the right hand side of above equation. Expanding the right hand side and rearranging all terms with respect to components of $(1,\theta),$ i.e., $1, \theta^1,\theta^2,\theta^3$, we get
$$
	\int_{\gamma_{x,\theta}} [v^1 (\partial_1f_0 - \partial_0f_1) + v^2 (\partial_2f_0 - \partial_0f_2) + v^3 (\partial_3f_0 - \partial_0f_3)]
$$
$$
	+[v^2 (\partial_2f_1 - \partial_1f_2) + v^3 (\partial_3f_1 - \partial_1f_3)] \theta^1
	+[v^1 (\partial_1f_2 - \partial_2f_1) + v^3 (\partial_3f_2 - \partial_2f_3)] \theta^2
$$
\vspace{.1mm}
$$
	+[v^1 (\partial_1f_3 - \partial_3f_1) + v^2 (\partial_2f_3 - \partial_3f_2)] \theta^3  \ ds.
$$
	Therefore,
$$
	(v.\nabla_x)Lf(x,\theta)=\int_{\gamma_{x,\theta}} \tilde{f}_v(x) \cdot (1, \theta) \ ds.
$$
	Here $\gamma_{x,\theta}$ is the light-like lines parameterized by their points of intersection with $t=0$ and direction $(1,\theta)$.
\end{proof}

\begin{remark}
	\normalfont
	i) For $n=2$, setting $v$ to be $(1,0)$ and $(0,1)$ yields to the following identities.
$$
	\partial_1 Lf(x,\theta)= \int_{\gamma_{x,\theta}} (\partial_1 {f_0}-\partial_t {f_1})+\theta ^2 (\partial_1 {f_2}-\partial_2 {f_1}) \ ds= \int_{\gamma_{x,\theta}} (-c_2 +\theta^2 c_0) \ ds.
$$
$$
	\partial_2 Lf(x,\theta)= \int_{\gamma_{x,\theta}} (\partial_2 {f_0}-\partial_t {f_2})+\theta ^1 (\partial_2 {f_1}-\partial_1 {f_2}) \ ds= \int_{\gamma_{x,\theta}} (c_1 -\theta^1 c_0) \ ds.
$$
	where $(c_0,c_1,c_2)=:\curl f$. Similar results can be seen in [\cite{41}, Proposition~2.8].

	ii) The vector field $\tilde{f}_v$ has the following property: for any $v \in \R^n$,
$$
	(0,v)\cdot \tilde{f}_{v}(x) = v^1\tilde{f}_{1_v}(x)+ v^2\tilde{f}_{2_v}(x)+ \dots + v^n\tilde{f}_{n_v}(x)=0.
$$
	This is analogous to solenoidal condition for vector fields in the Fourier domain.
 
 iii) Each component of $\tilde{f}_v$ is a superposition of components of $\curl f$ (for $n=2$) and of the generalized $\curl$, $\d f$ (for $n\ge 3$.) This is a very important property since it forms an overdetermined system of equations which helps us to recover the $\curl$ and generalized $\curl$, $\d f$.
 
  iv) Clearly
$$
	Lf=0 \ \Longrightarrow \ (v.\nabla_x)Lf=0, \quad \forall v \in \R^n.
$$
	For the case where the vector field $f$ is compactly supported,
$$
	Lf=0 \ \equiv \ (v.\nabla_x)Lf=0, \quad \forall v \in \R^n.
$$
	In fact, the directional derivative of $Lf$ with respect to $x$ is zero for all $v \in \R^n$, which implies that $Lf$ is constant. Now $f$ is compactly supported, therefore $Lf=0$. 
\end{remark}
	Above properties motivate us to work with $\tilde {f}_v$ and $(v.\nabla_x)L$ instead of $f$ and $Lf$ in the following sections. Our main result is a support theorem in the spirit of Theorem 2.1 \cite{39} as follow:

\begin{theorem}
	Let $n\ge 2$ and $f \in \mathcal{E^\prime} (\R^{1+n})$ be so that $\supp f$ expands with a speed less than one. Let $G$ be an open and connected neighborhood of $(x_0,\theta_0) \in \R^n \times S^{n-1}$ and $\gamma_{x_0,\theta_0}$ be a light-like line with direction $\theta_0$ passing through the point $x_0$. 

	i) For $n=2$, if $Lf(x,\theta) = 0$ in $G_{\pm}$ and if $\gamma_{x_0,\theta_0}$ does not intersect $\supp \curl f$, then none of the light-like lines $\gamma_{x,\theta}$, $(x,\theta) \in G_{\pm}$, does. Here $G_{\pm}$ is an open and connected neighborhood of $(x_0,\pm\theta_0)$ in $\R^2 \times S^{1}$. 

	ii) For $n \ge 3$, if $Lf(x,\theta) = 0$ in $G$ and if $\gamma_{x_0,\theta_0}$ does not intersect $\supp \d f,$ then none of the light-like lines $\gamma_{x,\theta}$, $(x,\theta) \in G$, does.
\end{theorem}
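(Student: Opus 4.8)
The plan is to translate the statement into a claim about the analytic wave front set of $\d f$ (resp.\ $\curl f$), prove that claim microlocally by an analytic stationary-phase argument, and then convert the microlocal information into the support statement by a connectedness argument fed by a Sato--Kawai--Kashiwara type analytic continuation theorem.

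\emph{Step 1 (reduction).} I would first pass from $f$ and $Lf$ to the parametrized fields $\tilde f_v$ and $L\tilde f_v:=(v\cdot\nabla_x)Lf$ using the Proposition and part~(iv) of the Remark above: from $Lf=0$ on $G$ (resp.\ $G_\pm$) one gets $L\tilde f_v=\int_{\gamma}\tilde f_v\cdot(1,\theta)\,ds=0$ on the same set for every $v\in\R^n$. Since, as $v$ ranges over $\R^n$, the components of $\tilde f_v$ exhaust all the components $(\d f)_{ij}$ for $n\ge3$ (and of $\curl f$ for $n=2$), a light-like line meets $\supp\d f$ iff it meets $\bigcup_v\supp\tilde f_v$; so both the hypothesis and the conclusion can be phrased entirely for the family $\{\tilde f_v\}_{v\in\R^n}$, which in addition satisfies the solenoidal-type relation $(0,v)\cdot\tilde f_v=0$ of part~(ii) of the Remark.

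\emph{Step 2 (microlocal core — the hard part).} A covector $(\tau,\xi)$ annihilating a null direction $(1,\theta)$ obeys $\tau=-\xi\cdot\theta$, hence $|\tau|\le|\xi|$: the conormals to light-like lines are exactly the space-like and light-like covectors, and the space-time Fourier transform of $Lg$ (for a function $g$) is $\hat g$ restricted to this cone, elliptically so on its open part. Realizing $L\tilde f_v$ as an oscillatory integral over the light-ray family, I would localize near a line $\gamma_{x_1,\theta_1}$ with a smooth cutoff — legitimate because the speed-$<1$ support condition makes the $s$-integration effectively over a compact set — introduce an FBI transform adapted to the geometry, and run Sj\"ostrand's analytic stationary-phase method in the manner of Stefanov and of Uhlmann. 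The phase is non-degenerate precisely on the \emph{open} space-like cone $\{|\tau|<|\xi|\}$, and after using the constraint $(0,v)\cdot\tilde f_v=0$ to eliminate the potential-field kernel of $L$ one obtains: if $L\tilde f_v=0$ near $(x_1,\theta_1)$ for all $v$, then for every $z_1\in\gamma_{x_1,\theta_1}$ and every space-like $\zeta$ with $\zeta\cdot(1,\theta_1)=0$ one has $(z_1,\zeta)\notin\WFA(\tilde f_v)$ for all $v$, equivalently $(z_1,\zeta)\notin\WFA(\d f)$ (resp.\ $\WFA(\curl f)$). This is the step I expect to be the main obstacle: one must carry the stationary-phase estimate uniformly down to the light cone and verify that the overdetermined system $\{L\tilde f_v\}_v$ together with $(0,v)\cdot\tilde f_v=0$ is analytic-elliptic on the whole open space-like cone. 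In dimension $n=2$ the conormals coming from a single null direction are too few to recover all three components of $\curl f$ at a point; this is exactly why the statement requires the data on neighborhoods $G_\pm$ of both $(x_0,\theta_0)$ and $(x_0,-\theta_0)$, so that the two families of conormals together become elliptic.

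\emph{Step 3 (connectedness and analytic continuation).} Let $A=\{(x,\theta)\in G:\gamma_{x,\theta}\cap\supp\d f=\emptyset\}$; it is open and, by hypothesis, contains $(x_0,\theta_0)$, so by connectedness of $G$ it suffices to show $A$ is closed in $G$. Suppose $(x_*,\theta_*)\in\bar A\cap G$ but $\gamma_{x_*,\theta_*}$ meets $\supp\d f$; by the speed-$<1$ condition this intersection is compact. I would then pick a point $z_*\in\gamma_{x_*,\theta_*}\cap\supp\d f$ near which one can build a real-analytic hypersurface $S\ni z_*$ with the following properties: $S$ is tangent to $(1,\theta_*)$ at $z_*$ with $N^*_{z_*}S$ space-like (possible because for $n\ge2$ the Euclidean orthogonal complement of $(1,\theta_*)$ in $\R^{1+n}$ is not contained in the light cone); $\supp\d f$ lies locally on one side of $S$; and through every $z\in S$ near $z_*$ runs a light-like line $\gamma_{x,\theta}$ with $(x,\theta)$ close to $(x_*,\theta_*)$ — hence in $G$ — which is tangent to $S$ at $z$ and lies locally on the opposite side of $S$. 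The existence of such an $S$ uses that the nearby lines in $A$ miss the support and that the speed-$<1$ bound leaves room to tilt $S$ off the light cone; this is a secondary, purely geometric difficulty. For such $z$, $N^*_zS$ is a space-like conormal to $\gamma_{x,\theta}$, so Step~2 yields $N^*S\cap\WFA(\d f)=\emptyset$ over a neighborhood of $z_*$ in $S$. By the Sato--Kawai--Kashiwara watermelon theorem — a distribution supported on one side of a real-analytic hypersurface with no conormal direction to that hypersurface in its analytic wave front set cannot contain a point of the hypersurface in its support — we get $z_*\notin\supp\d f$, a contradiction. Hence $A$ is closed, $A=G$. For $n=2$ the same scheme, with the microlocal core using conormals from both null directions and the connectedness run on $G_+$ and on $G_-$, gives $\gamma_{x,\theta}\cap\supp\curl f=\emptyset$ for all $(x,\theta)\in G_\pm$.
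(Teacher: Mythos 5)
Your Steps 1 and 2 coincide with the paper's route: the reduction to $\tilde f_v$ and $(v\cdot\nabla_x)Lf$ is Proposition 2.1, and the microlocal core is Theorem 4.1, proved exactly as you indicate by Sj\"ostrand's analytic stationary phase with the overdetermined system in $v$ and in perturbed directions $\theta$ supplying the ellipticity (and with the $\pm\theta_0$ requirement for $n=2$ for the reason you give). The paper then proves a local continuation statement (Lemma 5.1: if $Lf=0$ near $(x_0,\theta_0)$ and $\d f$ vanishes on one side of an analytic \emph{timelike} hypersurface $S$ tangent to the line at $z_0$, then $\d f=0$ near $z_0$, via Sato--Kawai--Kashiwara plus Theorem 4.1) and concludes by invoking the global deformation argument of Stefanov's Theorem 2.1 in \cite{39} with $Lf$ replaced by $(v\cdot\nabla_x)Lf$. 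So the architecture of your proposal is the same as the paper's.

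The genuine gap is in your Step 3, in the closedness of $A$. You need, at a touching point $z_*$ of the limiting line, an analytic hypersurface $S$ tangent to $\gamma_{x_*,\theta_*}$ with space-like conormal such that $\supp\d f$ lies locally on \emph{one side} of $S$; this one-sidedness is precisely the hypothesis of the SKK/watermelon step (and of Lemma 5.1), and it cannot be extracted from the fact that the nearby lines with parameters in $A$ miss the support. Those lines (even thickened to small open subfamilies of $A$) sweep out only a thin, possibly shrinking, support-free sliver near $\gamma_{x_*,\theta_*}$; the support can perfectly well accumulate at $z_*$ from both sides of every candidate hypersurface through $z_*$ tangent to the line (e.g.\ two solid regions touching at $z_*$ from opposite transversal directions, which nearby lines can still avoid when $1+n\ge 3$). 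So the open--closed argument, as you set it up, does not close. The mechanism the paper relies on (through Stefanov's proof, and visible in its Examples 6.1--6.2) is different in nature: one runs the continuity argument over an explicit analytic foliation by timelike hypersurfaces (e.g.\ the hyperboloid-type surfaces $\Gamma_\rho$) shrinking toward the good line(s), takes the extremal leaf that first touches $\supp\d f$, and gets the one-sided condition for free from extremality; at the touching point the leaf is tangent to lightlike lines whose parameters one checks lie in $G$ (this is where connectedness of $G$ and the deformation along a path of lines enter), and Lemma 5.1 then pushes the support strictly past the leaf, a contradiction. Without replacing your ad hoc construction of $S$ by such a foliation/extremality argument, Step 3 fails; with it, your outline matches the paper's proof.
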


\begin{remark}
	\normalfont
	For $n=2,$ we require the operator $L$ to be known in two different directions $(1,\theta)$ and $(1,-\theta)$. For discussion, we refer the reader to Fourier analysis in the following section. It is shown that, the ellipticity is lost when the operator $L$ is known only for one direction $(1,\theta)$.
\end{remark}

\section{Fourier analysis in the Minkowski case}
	In this section, we consider the case where the light-ray transform is known over all light-like lines (complete data). This allows us to do the Fourier analysis by fixing the initial point and stay in a small neighborhood of the direction $(1,\theta)$. As we mentioned above, this case has been already studied. We do this analysis to have some insight for the microlocal analysis part of our study.

	The following proposition is some preliminary results which stats that in the space-like cone $\lbrace(\tau,\xi): |\tau| < |\xi|\rbrace$, the operator $(v.\nabla_x)L$ recovers the Fourier transform of the $\curl f$ and the generalized $\curl$, $\d f,$ for $n=2$ and $n\ge 3$, respectively.

\begin{proposition}
	Let $f\in\mathcal{S}(\R^{1+n})$.

	i) For $n=2$, if $Lf(x,\theta)=0$ for all $x$ and for $\theta$ near $\pm\theta_0$, then $\mathcal{F}(\curl f)=0$ for $\zeta$ close $\zeta_0$, where $\zeta_0$ is the unique space-like vector up to re-scaling with the property $(1,\pm\theta_0)\cdot \zeta_0=0$.

	ii) For $n\geq 3$, if $Lf(x,\theta)=0$ for all $x$ and for $\theta$ near $\theta_0$, then $\mathcal{F}(\d f)=0$ for all $\zeta$ near the set $\lbrace \zeta \big| (1,\theta)\cdot \zeta=0 \rbrace$.
\end{proposition}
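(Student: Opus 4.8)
The plan is to Fourier-transform the definition \eqref{2} in the spatial variable $x\in\R^n$ and then reduce both parts to pointwise linear algebra for $\hat f$. Fixing $\theta$ and substituting $y=x+s\theta$ in \eqref{2} produces the Fourier slice identity
\[
\widehat{Lf}(\xi,\theta)=\hat f(-\theta\cdot\xi,\xi)\cdot(1,\theta)=\hat f_0(-\theta\cdot\xi,\xi)+\sum_{k=1}^{n}\theta^{k}\,\hat f_k(-\theta\cdot\xi,\xi),
\]
where $\hat f(\tau,\xi)$ denotes the full space-time Fourier transform. Setting $\zeta=(\tau,\xi)=(-\theta\cdot\xi,\xi)$ we have $(1,\theta)\cdot\zeta=0$; by Cauchy--Schwarz such a $\zeta$ is space-like unless $\theta\parallel\xi$ (when it is light-like), and conversely every $\zeta$ with $(1,\theta)\cdot\zeta=0$ arises in this way. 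Hence the hypothesis $Lf(x,\theta)=0$ for all $x$ and all $\theta$ near $\theta_0$ is equivalent to: $\hat f(\zeta)\cdot(1,\theta)=0$ whenever $(1,\theta)\cdot\zeta=0$ and $\theta$ is near $\theta_0$.

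For $n\ge 3$, fix a space-like $\zeta=(\tau,\xi)$ for which some $\theta$ near $\theta_0$ satisfies $(1,\theta)\cdot\zeta=0$. The admissible directions $\Theta_\zeta=\{\theta\in S^{n-1}:\theta\ \text{near}\ \theta_0,\ \theta\cdot\xi=-\tau\}$ form a relatively open piece of the sphere $S^{n-1}\cap\{\theta\cdot\xi=-\tau\}$, of dimension $n-2\ge 1$, and such a piece affinely spans the hyperplane $\{\theta\in\R^n:\theta\cdot\xi=-\tau\}$. Therefore the vectors $(1,\theta)$, $\theta\in\Theta_\zeta$, affinely span the $(n-1)$-dimensional affine subspace $A_\zeta=\{(1,\theta):\theta\cdot\xi=-\tau\}\subset\R^{1+n}$, which misses the origin. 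Any vector orthogonal to all of $A_\zeta$ must be orthogonal to its direction space $L_\zeta=\{(0,\nu):\nu\cdot\xi=0\}$, hence lie in $\operatorname{span}\{(1,0),(0,\xi)\}$, and must also be orthogonal to one point $(1,\theta_*)\in A_\zeta$; a short computation shows these two requirements force it to be a scalar multiple of $\zeta$. Since $\hat f(\zeta)$ and $\zeta$ are both orthogonal to $A_\zeta$, we get $\hat f(\zeta)\in\operatorname{span}\zeta$, so $\mathcal F(\d f)(\zeta)=\i\,\zeta\wedge\hat f(\zeta)=0$. As $\zeta$ ranges over the space-like cone swept out by the slices $\{(1,\theta)\cdot\zeta=0\}$ with $\theta$ near $\theta_0$, this yields part (ii).

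For $n=2$, the sphere $S^{n-1}\cap\{\theta\cdot\xi=-\tau\}$ is $0$-dimensional, so one direction gives only a single linear condition on $\hat f(\zeta)\in\mathbb C^3$ and is not enough; this is exactly why $Lf$ is assumed to vanish near both $\theta_0$ and $-\theta_0$. The vector $\zeta_0$ with $(1,\pm\theta_0)\cdot\zeta_0=0$ is, up to scale, $(0,\xi_0)$ with $\xi_0\perp\theta_0$, and is space-like since its time component vanishes. For $\zeta$ near $\zeta_0$ the equation $\theta\cdot\xi=-\tau$ on $S^1$ has two solutions, which by continuity split into $\theta_+$ near $\theta_0$ and $\theta_-$ near $-\theta_0$; both lie in the prescribed neighborhoods, so $\hat f(\zeta)\cdot(1,\theta_\pm)=0$. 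The vectors $(1,\theta_+)$ and $(1,\theta_-)$ are linearly independent (since $\theta_+$ is near $\theta_0\neq-\theta_0$) and span a plane in $\R^3$ to which $\zeta$ is orthogonal; hence $\hat f(\zeta)$, also orthogonal to that plane, is a scalar multiple of $\zeta$, and $\mathcal F(\curl f)(\zeta)=\i\,\zeta\times\hat f(\zeta)=0$ for all $\zeta$ near $\zeta_0$. This is part (i).

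The step I expect to require the most care is the linear-algebra bookkeeping for $n\ge 3$: that a relatively open portion of $S^{n-2}$ with $n\ge 3$ affinely spans the ambient hyperplane, and that the vectors orthogonal to both $A_\zeta$ and $\zeta$ reduce to the line $\operatorname{span}\zeta$; one should also check, via the implicit function theorem, that as $\theta$ varies near $\theta_0$ the slices $\{(1,\theta)\cdot\zeta=0\}$ actually fill a full neighborhood of the hyperplane $\{(1,\theta_0)\cdot\zeta=0\}$ within the space-like cone, which is what the phrase ``for all $\zeta$ near the set'' refers to. Finally, I note that the identity \eqref{3} gives an alternative route: applying the Fourier slice formula to the parametrized field $\tilde f_v$ yields $\i\,(v\cdot\xi)\,\widehat{Lf}(\xi,\theta)=\widehat{\tilde f_v}(\zeta)\cdot(1,\theta)$, so that $Lf\equiv 0$ forces the components of $\curl f$ (for $n=2$) or of $\d f$ (for $n\ge 3$) entering $\tilde f_v$ to vanish along $\zeta$; I would, however, keep the direct computation above as the main argument.
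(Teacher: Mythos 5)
Your proof is correct, and it takes a genuinely different route from the paper's. The paper never applies the slice theorem to $f$ itself: it first passes to the differentiated data $(v\cdot\nabla_x)Lf$ and the parametrized field $\tilde f_v$ of Proposition 2.1 (whose components are built from $\curl f$, resp.\ $\d f$), applies the vectorial Fourier slice theorem to $\tilde f_v$, and then eliminates componentwise using explicit choices: the two roots $\theta_\pm(\zeta)$ and $v=\xi$ for $n=2$, a one-parameter family $\theta(a)$ with differentiation in $a$ and special $v\in\theta^\perp$ for $n=3$, plus a Lorentz transformation to reduce a general space-like $\zeta$ to a standard one. You instead use the full hypothesis $Lf=0$ directly to get $\hat f(\zeta)\cdot(1,\theta)=0$ for all admissible $\theta$, and replace all coordinate computations by one orthogonality argument: the vectors $(1,\theta)$ with $\theta$ in a relatively open piece of the slice sphere span the full Euclidean orthocomplement of $\zeta$ (two independent vectors $(1,\theta_\pm)$ when $n=2$), forcing $\hat f(\zeta)\in\mathbb{C}\,\zeta$ and hence $\mathcal F(\curl f)(\zeta)=0$, resp.\ $\mathcal F(\d f)(\zeta)=0$; your span and orthocomplement computations check out, and your restriction to space-like $\zeta$ in the swept-out cone matches exactly what the paper's own proof establishes. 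What each approach buys: yours is shorter, coordinate-free, and avoids both the auxiliary $\tilde f_v$ and Lorentz reductions; the paper's more laborious route is deliberately parallel to the microlocal argument of Section~4, where only localized data is available, the derivative $(v\cdot\nabla_x)L$ and finitely many perturbed directions are what survive localization, and (as Remark 3.1(ii) notes) discrete direction choices rather than differentiation are preferred for stability — so the paper's proof doubles as a blueprint for the harder theorem, while yours is the cleaner argument for this complete-data statement.
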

\begin{proof}
	i) Let $\zeta^0=(\tau^0,(\xi^1)^0,(\xi^2)^0)$ be a fixed space-like vector, and without loss of generality assume that $\theta_0=\pm \e_2 \in \R^2$ such that $(1,\pm\theta_0)\cdot \zeta^0=0$. One has
\begin{align*}
	\tau^0 \pm (\xi^2)^0=0 \quad \text{which implies that $\tau^0=(\xi^2) ^0=0$}.
\end{align*}
	Therefore, the vector $\zeta^0$ has to be in the form of $(0,(\xi^1)^0,0),$ which means, up to re-scaling it is a unique $\zeta^0$ with property $(1,\pm\theta_0)\cdot \zeta^0=0$. Hence, one may choose $\zeta^0=\e^1 \in \R^{1+2}$. Note that this choice of $\zeta^0$ can be done since we may apply Lorentzian transformation to any fixed space-like vector and transform it to $\e^1$.
	We first state the \textbf{Vectorial Fourier Slice Theorem} for a general set of lines:
$$
	\hat{f}(\zeta)\cdot \omega = \int_{\omega^\bot} e^{-\i z\cdot \zeta} Lf(z,\omega)\ \d S_z, \quad  \quad \text{$\forall \omega\bot \zeta,$ \ $\forall f\in L^1(\R^n)$.}
$$
	To prove this, note that the integral on the RHS equals
$$
	\int_{\omega^\bot} \int_\R e^{-\i z\cdot \zeta} f_i(z+s\omega) \omega^i \d s \d S_z=\omega^i\int_{\omega^\bot} \int_\R e^{-\i z\cdot \zeta} f_i(z+s\omega) \d s \d S_z.
$$
	Set $x = z +s\omega$. Then, it is easy to see that when $\omega \bot \zeta$, we have $x\cdot\zeta = z \cdot \zeta$ and therefore above integral equals $\hat{f}_i(\zeta)$. In this paper, we apply the Vectorial Fourier Slice theorem when the set of lines is restricted to a set of light-like lines.

	By assumption $(v.\nabla_x)Lf=0$ for all $v \in \R^2$. Let $v$ be an arbitrary but fixed vector in $\R^2$. For $\zeta=\e^1$, by Vectorial Fourier Slice theorem we have
\be{4}
	0=\hat{\tilde{f}}_{v}(\zeta)\cdot (1,\theta) = \hat{\tilde{f}}_{0_v}(\zeta) + \hat{\tilde{f}}_{1_v}(\zeta)\theta^1 + \hat{\tilde{f}}_{2_v}(\zeta)\theta^2, \quad \forall (1,\theta)\bot \zeta.
\ee
	Since $(1,\pm \theta_0)\cdot \zeta=0$, above equation implies
$$
	\left\{\begin{array}{ll}
	\hat{\tilde{f}}_{v_0}(\zeta)=v^1(\xi^1\hat{f}_0-\tau \hat{f}_1) + v^2(\xi^2\hat{f}_0-\tau \hat{f}_2)=0,\\
	\hat{\tilde{f}}_{v_2}(\zeta)=v^1(\xi^1\hat{f}_2-\xi^2 \hat{f}_1)=0.
	\end{array}\right.
$$
	The vector $v$ is arbitrary, therefore one may choose two linearly independent vectors, say $v_1=\theta^\bot$ and $v_2=\theta$, to conclude
$$
	\xi^1\hat{f}_0-\tau \hat{f}_1 = \xi^2\hat{f}_0-\tau \hat{f}_2 = \xi^1\hat{f}_2-\xi^2 \hat{f}_1 =0 \ \Longrightarrow \ \mathcal{F}(\curl f)(\zeta)=0.
$$

	Now let $\zeta=(\tau, \xi)\in \R^{1+2}$ be any non-zero space-like vector. We solve the equation $(1,\theta)\cdot\zeta=0,$ for $\theta$. Set $\theta=a\xi+ b\xi^\bot$. Therefore,
\begin{align*}
	-\tau=\theta\cdot\xi=(a\xi+ b\xi^\bot)\cdot\xi=a |\xi|^2 \quad \Longrightarrow \quad a= \frac{-\tau}{|\xi|^2}.
\end{align*}
	On the other hand,
\begin{align*}
	1=|\theta|^2=(a^2+b^2)|\xi|^2 \quad \Longrightarrow \quad b=\pm \frac{1}{|\xi|^2}\sqrt{-\tau^2+|\xi|^2}.
\end{align*}
	For $\xi\in \R^2$, we set
\be{5}
	\theta=\theta_\pm(\zeta)=\frac{1}{|\xi|^2}(-\tau\xi^1\mp \sqrt{-\tau^2+|\xi|^2}\xi^2, -\tau\xi^2\pm \sqrt{-\tau^2+|\xi|^2}\xi^1).
\ee
	Clearly $(1,\theta_\pm(\zeta))\cdot \zeta=0$ and $\theta_\pm$ are the only two choices with the property $|\theta_\pm(\zeta)|=1$. In order to have $\zeta$ close to $\zeta_0=\e^1$, we require $\theta_\pm ^1=0$, so we have
\begin{align*}
	-\tau\xi^1\mp \sqrt{-\tau^2+|\xi|^2}\xi^2=0.
\end{align*}
	Since $\zeta$ is a space-like vector, $\sqrt{-\tau^2+|\xi|^2}$ is non-zero which implies that $\tau \xi^1=\xi^2=0$. Note that $\xi^1$ is not zero, otherwise $\theta_{\pm}(\zeta)$ would be undefined. This forces $\tau$ to be zero, and therefore $\zeta \approx\zeta_0=\e^1$. In particular, this implies that $\theta=\theta_\pm(\zeta)$ is analytic near $\zeta^0=\e^1 \in \R^{1+2}$ with $\theta_\pm(\zeta^0)=\theta_0=\pm \e_2 \in \R^2$. Hence, $\theta_\pm (\zeta)$ is within a neighborhood of $\pm \theta_0$, $\theta \approx \pm \theta_0$, if $\zeta$ is within a neighborhood of $\zeta_0$, $\zeta \approx \zeta_0$.
	Considering our choices of direction $\theta_\pm(\zeta)$, the equation \r{4} can be written as
\be{6}
	\hat{\tilde{f}}_{0_v}(\zeta)+\frac{1}{|\xi|^2}(-\tau\xi^1 + \sqrt{-\tau^2+|\xi|^2}\xi^2) \hat{\tilde{f}}_{1_v}(\zeta)+\frac{1}{|\xi|^2}(-\tau\xi^2 - \sqrt{-\tau^2+|\xi|^2}\xi^1) \hat{\tilde{f}}_{2_v}(\zeta)=0,
\ee
	and
\be{7}
	\hat{\tilde{f}}_{0_v}(\zeta)+\frac{1}{|\xi|^2}(-\tau\xi^1 - \sqrt{-\tau^2+|\xi|^2}\xi^2) \hat{\tilde{f}}_{1_v}(\zeta)+\frac{1}{|\xi|^2}(-\tau\xi^2 + \sqrt{-\tau^2+|\xi|^2}\xi^1) \hat{\tilde{f}}_{2_v}(\zeta)=0.
\ee
	Subtract $\r{7}$ from $\r{6}$ to get
\be{8}
	0=\xi^2 \hat{\tilde{f}}_{1_v}(\zeta)-\xi^1 \hat{\tilde{f}}_{2_v}(\zeta)=(\xi \cdot v)[ \xi^2 \hat{f}_1(\zeta) - \xi^1 \hat{f}_2(\zeta)], \quad \text{for the fixed $v \in \R^2$}.
\ee
	Multiplying $\r{6}$ by $\xi^1$ and using $\r{8}$, we get
\be{9}
	0=\tau \hat{\tilde{f}}_{1_v}(\zeta)-\xi^1 \hat{\tilde{f}}_{0_v}(\zeta)=(\xi \cdot v)[\tau  \hat{f}_1(\zeta) - \xi^1  \hat{f}_0(\zeta)], \quad \text{for the fixed $v \in \R^2$}.
\ee
	Note that for $i=0,1,2,$ we expanded $\hat{\tilde{f}}_{i_v}(\zeta,v)$ in \r{8} and \r{9}, and rearranged both equations in terms of $\hat{f}_i(\zeta)$, to get the RHS of above equations. Now set $v=\xi$, therefore $\xi \cdot v=|\xi|^2 \not =0$ and
\be{10}
	\xi^2 \hat{f}_1(\zeta) - \xi^1 \hat{f}_2(\zeta)=\tau  \hat{f}_1(\zeta) - \xi^1  \hat{f}_0(\zeta)=0.
\ee
	Clearly $\xi^2 \hat{f}_1(\zeta) - \xi^1 \hat{f}_2(\zeta)=0$ implies that $\hat{\tilde{f}}_{1_v}(\zeta)=\hat{\tilde{f}}_{2_v}(\zeta)=0$ for $v \in \R^2$. Plugging $\hat{\tilde{f}}_{1_v}(\zeta)=0$ into the LHS of \r{9}, we conclude  $\xi^1\hat{\tilde{f}}_{0_v}(\zeta)=0$. The vector $\zeta$ is space-like with property $\xi^1 \not =0$, therefore
$$
	\hat{\tilde{f}}_{0_v}(\zeta)=v^1(\xi^1\hat{f}_0-\tau \hat{f}_1) + v^2(\xi^2\hat{f}_0-\tau \hat{f}_2)=0.
$$
	Since $v$ is arbitrary in $\R^2$, any two linearly independent vectors $v_1$ and $v_2$ implies that $\xi^2\hat{f}_0-\tau \hat{f}_2=0$. Notice that one may use the equation on the RHS of \r{10} and the fact that $v$ is an arbitrary vector to have the same conclusion. This shows that all three components of $\curl f$ in Fourier domain are zeros, and thus $\mathcal{F}(\curl f)(\zeta)=0$.

	ii) Let first $n=3$ and $\zeta=\zeta^0$ be a fixed non-zero space-like vector. Applying the Lorentzian transformation, we may assume that $\zeta^0=\e^{2}:=(0,0,1,0)\in \R^{1+3}$. Set
$$
	\theta(a)=\sin (a) \e_1 +\cos (a) \e_3 =(\sin (a),0,\cos (a)).
$$
	Clearly $\big|\theta(a)|=1$, $\theta_0=\theta(0)=\e_3$, and $(1,\theta(a))\cdot \zeta=0$.

	By assumption $(v.\nabla_x)Lf=0$ for all $v \in \R^3$. Let $v$ be an arbitrary fixed vector in $\R^3$. For $\zeta=\e^2$, by Vectorial Fourier Slice theorem
$$
	0=\hat{\tilde{f}}_{v}(\zeta)\cdot (1,\theta) = \hat{\tilde{f}}_{0_v}(\zeta) + \hat{\tilde{f}}_{1_v}(\zeta)\theta^1 + \hat{\tilde{f}}_{2_v}(\zeta)\theta^2 + \hat{\tilde{f}}_{3_v}(\zeta)\theta^3, \quad \forall (1,\theta)\bot \zeta
$$
	Plugging $\theta=\theta(a)$ into above equation we get
\be{11}
	0=\hat{\tilde{f}}_{v}(\zeta)\cdot (1,\theta) = \hat{\tilde{f}}_{0_v}(\zeta) + \hat{\tilde{f}}_{1_v}(\zeta)\sin(a) + \hat{\tilde{f}}_{3_v}(\zeta)\cos(a).
\ee
	Differentiating above equation with respect to parameter $a$ once and twice, we get
$$
	\left\{\begin{array}{ll}
	\ \ \hat{\tilde{f}}_{1_v}(\zeta)\cos(a) - \hat{\tilde{f}}_{3_v}(\zeta)\sin(a)=0,\\
	-\hat{\tilde{f}}_{1_v}(\zeta)\sin(a) - \hat{\tilde{f}}_{3_v}(\zeta)\cos(a)=0. \\
	\end{array}\right.
$$
	It is easy to see that the last two equations imply that $\hat{\tilde{f}}_{1_v}(\zeta)=\hat{\tilde{f}}_{3_v}(\zeta)=0$ for $v \in \R^2$. Now by equation \r{11} we conclude that $\hat{\tilde{f}}_{0_v}(\zeta)=0$ for $v \in \R^2$.

	Our goal is to show the Fourier transform of the generalized $\curl$ of $f$, $\mathcal{F}(\d f),$ is zero. Let $v=(-\cos(a),1,\sin(a))\in\theta^\bot$ and plug it into $\hat{\tilde{f}}_{i_v}(\zeta)=0$ for $i=0,1,3$. We have
$$
	\left\{\begin{array}{ll}
	-(\xi^1\hat{f}_0-\tau\hat{f}_1)\cos (a) + (\xi^2\hat{f}_0-\tau\hat{f}_2) + (\xi^3\hat{f}_0-\tau\hat{f}_3)\sin(a)=0,\\
	\ \ (\xi^2\hat{f}_1-\xi^1\hat{f}_2) + (\xi^3\hat{f}_1-\xi^1\hat{f}_3)\sin(a)=0, \\
	-(\xi^1\hat{f}_3-\xi^3\hat{f}_1)\cos(a) + (\xi^2\hat{f}_3-\xi^3\hat{f}_2)=0.\\
	\end{array}\right.
$$
	One may repeat above differentiation argument for the first equation to conclude that
$$
	\xi^1\hat{f}_0-\tau\hat{f}_1=\xi^2\hat{f}_0-\xi^0\hat{f}_3=\xi^3\hat{f}_0-\xi^0\hat{f}_3= 0.
$$
	Using the same argument simultaneously for the second and third equations implies that
$$
\xi^2\hat{f}_1-\xi^1\hat{f}_2= \xi^2\hat{f}_3-\xi^3\hat{f}_2= \xi^3\hat{f}_1-\xi^1\hat{f}_3 =0.
$$

	Therefore, $\mathcal{F}(\d f)(\zeta)=0$ for a fixed $\zeta=\e^2$. Note that, one may choose three linearly independent vectors $v_1,v_2, v_3\in \R^3$, and conclude the same result.

	To have the result for an arbitrary $\zeta$, we use the fact that the Lorentzian transformation is transitive and rotates every space-like vector to a space-like vector. Let $\mathcal{L}_{\zeta^0}$ be a Lorentzian transformation with the property $\mathcal{{L}}^{-1} _{\zeta^0} \zeta=\zeta^0=\e^2$ and let $\mathcal{L}$ with $\mathcal{L}x=y$ be a Lorentzian trasformation whose representation in Fourier domain is given by $\mathcal{L}_{\zeta^0}$.
	By the definition of Fourier transform, one has
$$
	\mathcal{F}[(\d f){\mathcal{L}(.)}](\zeta)=\int (\d f)(\mathcal{L}x) e^{ix\cdot \zeta} dx=\int (\d f)(y) e^{i\mathcal{L}^{-1}y\cdot \zeta} \big| det\mathcal{L}^{-1}\big| dy
$$
$$	
	=\int (\d f)(y) e^{iy\cdot \mathcal{L}_{\zeta^0}^{-T}\zeta} \big| det\mathcal{L}^{-1}\big| dy.
$$
	Therefore,
$$
	\mathcal{F}[(\d f){\mathcal{L}(.)}](\zeta)=\big| \det \mathcal{L}^{-1}\big| \mathcal{F}(\d f){(\mathcal{L}^{-T}\zeta)}.
$$
	But $\mathcal{L}^{T}=\mathcal{L}$ and $\mathcal{L}^{-1}(\zeta)=\zeta^0$, hence
$$
	\mathcal{F}[(\d f){\mathcal{L}(.)}](\zeta)=\big| \det \mathcal{L}^{-1}\big| \mathcal{F}(\d f)(\zeta^0)=0,
$$
	since $\mathcal{F}(\d f)(\zeta^0)=0$. This proves that for any space-like vector $\zeta$ near $\zeta^0$, the Fourier transform of generalized $\curl$ of $f$ vanishes as desired.
	For the general case $n > 3 $, one needs to choose $n$ linearly independent vectors $v_1,v_2, \dots, v_n \in \R^n$ to show $\mathcal{F}(\d f)(\zeta)=0$.
\end{proof}

\begin{corollary}
	Let $f\in\mathcal{C}_0 ^\infty (\R^{1+n})$.

	i) For $n=2$, if $Lf(x,\theta)=0$ \ for $\theta$ near $\pm\theta_0$, then $f$ is a smooth potential field with compact support, that is, $f=d\phi$ with some $\phi(x) \rightarrow 0$, as $|x|  \rightarrow \infty$.

	ii) For $n\geqslant 3$, \ if $Lf(x,\theta)=0$ \ for $\theta$ near $\theta_0$, then $f$ is a smooth potential field with compact support, that is, $f=d\phi$ with some $\phi(x) \rightarrow 0$, as $|x|  \rightarrow \infty$.
\end{corollary}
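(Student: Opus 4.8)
The plan is to reduce the statement to the Proposition above, together with the Paley--Wiener theorem and the Poincar\'e lemma. As in the Proposition, we read the hypothesis $Lf(x,\theta)=0$ as holding for all $x\in\R^n$ and all $\theta$ in a neighborhood of $\pm\theta_0$ (case i) or of $\theta_0$ (case ii). Then the Proposition applies directly: in case i) it yields $\mathcal F(\curl f)(\zeta)=0$ for $\zeta$ in an open neighborhood of $\zeta_0$, and in case ii) it yields $\mathcal F(\d f)(\zeta)=0$ for $\zeta$ in an open neighborhood of the set $\{\zeta:(1,\theta_0)\cdot\zeta=0\}$. In either case we obtain the vanishing of a Fourier transform on a nonempty open subset of the frequency space.

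The first real step is to promote this local vanishing to global vanishing. Since $f\in\mathcal C_0^\infty(\R^{1+n})$, the Paley--Wiener theorem shows that $\hat f$ is the restriction to $\R^{1+n}$ of an entire function on $\mathbb C^{1+n}$. Both $\curl f$ (for $n=2$) and $\d f$ (for $n\ge3$) are obtained from $f$ by applying first-order constant-coefficient differential operators, so each of their components has Fourier transform equal to $\hat f$ times a polynomial of degree one in $\zeta$; in particular each such component extends to an entire function on $\mathbb C^{1+n}$. A function that is real-analytic on the connected set $\R^{1+n}$ and vanishes on a nonempty open subset vanishes identically; hence $\curl f\equiv0$ on $\R^{1+n}$ in case i), and $\d f\equiv0$ on $\R^{1+n}$ in case ii).

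Next I would invoke the Poincar\'e lemma. Viewing $f$ as a smooth $1$-form on the contractible manifold $\R^{1+n}$, the condition $\curl f=0$ (for $n=2$, under the standard identification of $2$-forms on $\R^3$ with vector fields) and the condition $\d f=0$ (for $n\ge 3$) both say precisely that $f$ is closed; therefore $f=d\phi$ for some $\phi\in C^\infty(\R^{1+n})$, for instance $\phi(z)=\int_0^1 f(tz)\cdot z\,dt$. It remains to normalize $\phi$ at infinity. Since $\supp f$ is compact and $1+n\ge3$, the complement $\R^{1+n}\setminus\supp f$ has exactly one unbounded connected component $U$; on $U$ we have $d\phi=f=0$, so $\phi$ equals some constant $c$ there, and replacing $\phi$ by $\phi-c$ we may assume $\phi$ vanishes on $U$. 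Then $\phi$ is supported in the compact set $\R^{1+n}\setminus U$, so in particular $\phi(x)\to0$ as $|x|\to\infty$, which exhibits $f$ as the claimed potential field.

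The only delicate point is the analytic-continuation step in the second paragraph; this is where the hypothesis $f\in\mathcal C_0^\infty$ is used in an essential way. The Proposition by itself controls $\mathcal F(\curl f)$, respectively $\mathcal F(\d f)$, only on an open set of space-like frequencies and gives no information on the time-like cone, and it is precisely the entirety furnished by Paley--Wiener that allows the missing frequencies to be filled in. The remaining ingredients---the Poincar\'e lemma and the normalization of the integration constant using connectedness of the complement of a compact set in dimension at least $3$---are routine.
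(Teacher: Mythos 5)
Your proposal is correct and follows essentially the same route as the paper: Proposition 3.1 gives local vanishing of $\mathcal F(\curl f)$, respectively $\mathcal F(\d f)$; analyticity of the Fourier transform of the compactly supported $f$ (Paley--Wiener) upgrades this to global vanishing; and the Poincar\'e lemma on the simply connected space $\R^{1+n}$ yields $f=\d\phi$. Your treatment is in fact slightly more complete than the paper's, since you spell out the identity-theorem step and the normalization of $\phi$ on the unbounded component of the complement of $\supp f$, which the paper handles only implicitly via its path-integral definition of $\phi$.
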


\begin{proof}
	i) The first part of Proposition 3.1 implies that $\mathcal{F}(\curl f)=0$. Since $f\in\mathcal{C}_0 ^\infty(\R^{1+2})$, we extend $f$ as zero outside of the $\supp f$. Now by analyticity of Fourier transform, $\mathcal{F}(\curl f)$ is zero everywhere. Applying the inverse Fourier transform implies that $\curl f=0$ everywhere. Since the time-space $\R^{1+2}$ is a simply connected domain, there exists a finite smooth function $\phi$ such that $f=d\phi$; in fact let $x \in \supp f$ and $x_0$ be a point outside of $\supp f$. Let $c(t)$ be a path connecting $x_0$ to $x$. We define $\phi$ as follow:
\begin{align*}
	\phi(x)= \int_{x_0} ^{x} f(c(t)) \cdot c^\prime(t) dt + \phi(x_0),
\end{align*}
	which is smooth and satisfies $f=\d \phi.$

	ii) By the second part of Proposition 3.1 we know that $\mathcal{F}(\d f)=0$. Similar argument as part $(i)$ shows $\d f=0$ and therefore, $f$ is a smooth potential vector field with compact support.
\end{proof}

\begin{remark}
	\normalfont
	i) For $n=2$ there are two discrete choices of directions, $\pm\theta_0$, and this is necessary to have the result. Following example shows that one cannot decrease the number of directions from two to one. Let $\zeta$ be a space-like vector and $\phi \in \mathcal{S}$ be supported in the interior of open cone $\lbrace |\tau| < |\xi| \rbrace$. Consider $\theta_\pm(\zeta)$ defined by \r{8} and set
$$
	\eta=\hat{f}(\zeta)=(1,\frac{\xi^2}{\sqrt{-\tau^2+|\xi|^2}},\frac{-\xi^1}{\sqrt{-\tau^2+|\xi|^2}})\hat{\phi}(\zeta).
$$
	Clearly $\eta$ is a non-zero vector field in the Schwartz space $\mathcal{S}$ and is in the kernel of light-ray transform as it solves $(1,\theta_+(\zeta))\cdot \hat{f}(\zeta)=0$. Notice that, $(1,\theta_-(\zeta))\cdot \hat{f}(\zeta)\not=0$. This example does not provide a compactly supported vector field, however, it shows this is an obstruction to consider only one light-ray and stay close to it for the reconstruction. For the Minkowski spaces of signature $1+3$ or higher, however, this is not an obstruction. For instance when $n=3$, one may consider a two-parameter family of directions, $\theta(a,b)$, near a fixed $\theta_0$ and do the reconstruction process by perturbation.

	ii) In above proposition for $n\ge 3$, to show the uniqueness results we performed differentiation which is not problematic. However, for the stable inversion results, a differentiation may not preserve the stability. In other words, one may choose several discrete values of non-zero parameters near zero to create an invertible linear system to get stability estimate results. For discussion, we refer the reader to proof of theorem 4.1 for $n\ge3$.
\end{remark}

	In the next section, we state a theorem on the recovery of analytic space-like singularities in the Minkowski case which is a tool to prove our main result.
\section{Microlocal recovery of analytic wave front set}
	In this section, we mainly follow the analytic microlocal analysis argument to show that we can recover all space-like analytic singularities of $f$ conormal to the light-like lines along with integration of operator $(v.\nabla_x)L$. (See also [\cite{39}, Lemma~3.1])

	We first introduce the \textbf{Analytic Wave Front Set} (or “analytic singular spectrum) of a vector-valued distribution. For the case of a scalar-valued distribution, the definition can be found in  [\cite{38}, Sj\"ostrand]. We recall that, there are three existing definitions due to Bros-Iagolnitzer \cite{9}, H\"ormander \cite{19}, and Sato \cite{35}. Bony \cite{8} and Sj\"ostrand have shown the equivalence of all these definitions. For a vector-valued distribution $f=(f_0,f_1,f_2,\dots,f_n)\in \mathcal{D}^\prime (X, \mathbb{C}^{1+n})$, we define the analytic wave front set of $f$, $\WFA(f)$, as the union of $\WFA(f_i)$. Note that, for the vector-valued distribution $f$, the analytic wave front set $\WFA(f)$ does not specify in which component $f$ is singular. In our work, we follow the Sj\"ostrand's exposition.

\begin{theorem}
	Let $f\in\mathcal{E}^\prime(\R^{1+n})$ and let $\gamma_{x_0,\theta_0}$ be a fixed light-like line so that  $\gamma_{x,\theta}(s)\not\in \supp f$ for $|s|\ge 1/C$  with some $C$ for all $(x,\theta)$ near $(x_0,\theta_0)$.

	i) For $n=2$, if $Lf(x,\theta)=0$ for all $x, \theta$ near $(x_0,\pm\theta_0)$, then $\WFA(\curl f)$ contains no space-like vectors conormal to $\gamma_{x_0,\pm\theta_0}$.

	ii) For $n\geqslant 3$, if $Lf(x,\theta)=0$ for all $x, \theta$ near $(x_0,\theta_0)$, then $\WFA(\d f)$ contains no space-like vectors conormal to $\gamma_{x_0,\theta_0}$.
\end{theorem}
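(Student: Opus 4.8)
The plan is to adapt the analytic stationary phase argument of [\cite{39},~Lemma~3.1] (which follows Sj\"ostrand \cite{38}) to the vectorial setting, trading $f$ and $Lf$ for the parametrized field $\tilde f_v$ and the derivative $(v.\nabla_x)Lf$ via Proposition~2.1. Since $Lf=0$ near $(x_0,\theta_0)$ (and near $(x_0,-\theta_0)$ when $n=2$), also $(v.\nabla_x)Lf=0$ there for every $v$, so Proposition~2.1 yields
\[
\int_{\gamma_{x,\theta}}\tilde f_v\bigl(\gamma_{x,\theta}(s)\bigr)\cdot(1,\theta)\,\d s=0,\qquad \forall\, v\in\R^{n},
\]
for all $(x,\theta)$ in a neighborhood of $(x_0,\theta_0)$, resp. of $(x_0,\pm\theta_0)$; by the support hypothesis the $s$-integral is over a fixed compact interval. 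A Lorentzian transformation is an analytic diffeomorphism that maps light-like lines to light-like lines and space-like covectors to space-like covectors, hence intertwines $L$, $\curl$ (resp. $\d$), and the analytic wave front set; so, exactly as in the proof of Proposition~3.1, we may assume the fixed space-like covector $\zeta_0$ conormal to $\gamma_{x_0,\theta_0}$ equals $\e^1$ when $n=2$ and $\e^{n-1}$ when $n\ge3$, in which case the admissible directions $\{\theta\in S^{n-1}:(1,\theta)\cdot\zeta_0=0\}$ are the two points $\pm\theta_0=\pm\e_2$ ($n=2$) and an $(n-2)$-sphere through $\theta_0$ ($n\ge3$).

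Next I would construct a light-ray FBI (Bros--Iagolnitzer) transform. Fix $\lambda\gg1$, a point $z$ near $\gamma_{x_0,\theta_0}$, and a covector $\zeta$ near $\zeta_0$; multiply the displayed identity by an elliptic analytic amplitude $a(x,\theta;z,\zeta,\lambda)$, by an (almost analytic) cutoff $\chi$ equal to $1$ near $(x_0,\theta_0)$ (resp. near $(x_0,\pm\theta_0)$) and supported close to it, and by $e^{\i\lambda\Phi(x,\theta;z,\zeta)}$, then integrate in $(x,\theta)$. The substitution $y=\gamma_{x,\theta}(s)=(s,x+s\theta)$, which has unit Jacobian for fixed $\theta$, turns this into
\[
0=\int_{S^{n-1}}\!\!\int_{\R^{1+n}} e^{\i\lambda\Phi(y'-y_0\theta,\theta;z,\zeta)}\,a\,\chi\,\bigl(\tilde f_v(y)\cdot(1,\theta)\bigr)\,\d y\,\d\theta .
\]
Now $\Phi$ is chosen so that Sj\"ostrand's analytic stationary phase applies to the $\theta$-integral: its critical points are exactly the admissible directions, they are nondegenerate (transversally, when $n\ge3$), and integrating in $\theta$ leaves $\int e^{\i\lambda\psi(y;z,\zeta)}\,b(y;z,\zeta,\lambda)\,\bigl(\tilde f_v(y)\cdot(1,\theta_*(\zeta))\bigr)\,\d y$ with $b$ analytic and elliptic and $\psi$ a genuine FBI phase (holomorphic in $y$, nondegenerate critical point at $y=z$, positive-definite imaginary Hessian there). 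Expanding $\tilde f_v\cdot(1,\theta_*)$ by the formula of Proposition~2.1 as a fixed linear combination of components of $\curl f$ ($n=2$) or of $\d f$ ($n\ge3$), this says a specific linear combination of the FBI transforms of those components is $O(e^{-\lambda/C})$ uniformly for $(z,\zeta)$ near $(x_0,\zeta_0)$.

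The last step is elimination, parallel to Proposition~3.1. For $n=2$ I would use both admissible directions $\pm\theta_0$ and a basis $v_1,v_2$ of $\R^2$; the four resulting relations are the analytic-microlocal analogues of \r{4}--\r{10}, and the same elimination shows the FBI transform of every component of $\curl f$ is exponentially small near $(x_0,\zeta_0)$, i.e. $\zeta_0\notin\WFA(\curl f)$. For $n\ge3$ I would use finitely many admissible directions $\theta$ near $\theta_0$ on the $(n-2)$-sphere together with a basis of $\R^n$ in $v$, replacing the $\theta$-differentiations used for uniqueness in Proposition~3.1 by discrete samples so as to preserve the exponential estimates (cf. Remark~3.3(ii)); solving the resulting invertible linear system gives $\zeta_0\notin\WFA(\d f)$. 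Undoing the Lorentzian normalization and using that $\WFA$ of a vector field is the union of the componentwise wave front sets finishes the proof.

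The main obstacle is the analytic stationary phase step: one must exhibit a phase $\Phi$ whose set of $\theta$-critical points is precisely $\{(1,\theta)\cdot\zeta=0\}$ with the right nondegeneracy --- an honest constraint when $n=2$, where this set is two isolated points, and when $n\ge3$, where one needs a clean critical manifold --- and then check that after the $\theta$-integration the leftover phase $\psi$ is a bona fide FBI phase carrying an elliptic amplitude. In parallel one must show that the region where $\chi$ fails to be locally constant contributes only $O(e^{-\lambda/C})$, which is where the positivity of $\Im\psi$ away from the critical point (together with the almost-analytic choice of $\chi$ in $\theta$) is used to gain exponential rather than merely rapid smallness. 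Propagating the $O(e^{-\lambda/C})$ bounds uniformly through the elimination is then routine.
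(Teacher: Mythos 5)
Your overall skeleton coincides with the paper's: reduce to $\tilde f_v$ and $(v.\nabla_x)Lf$ via Proposition~2.1, normalize $\zeta^0$ by a Lorentz transformation, run an analytic/FBI argument to get exponentially small transforms of the components, and then eliminate algebraically, using both branches $\pm\theta_0$ when $n=2$ and several nearby directions together with a basis of $v$'s when $n\ge 3$. The genuine gap is in the central analytic step, which you set up differently from the paper and then explicitly defer: you integrate over $\theta\in S^{n-1}$ and ask for a phase $\Phi(x,\theta;z,\zeta)$ whose $\theta$-critical set is exactly $\{(1,\theta)\cdot\zeta=0\}$, nondegenerate for $n=2$ and a clean $(n-2)$-dimensional critical manifold for $n\ge3$. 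No such phase is exhibited, and the obvious candidate fails: the phase $x\cdot\xi$ inherited from the Fourier-slice computation becomes $z\cdot\zeta - z^0\,(1,\theta)\cdot\zeta$ after the substitution $z=\gamma_{x,\theta}(s)$, and its $\theta$-critical points are $\theta=\pm\xi/|\xi|$, not the admissible set. Moreover, for $n\ge3$ a clean-critical-manifold version of \emph{analytic} stationary phase is not what [\cite{38}, Theorem~2.8] provides (it treats a nondegenerate critical point), so the step you call ``the main obstacle'' is not a routine verification but the heart of the proof, and as written it is missing.

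The paper avoids this entirely: there is no $\theta$-integration and no stationary phase in $\theta$. Instead, for each space-like $\zeta$ near $\zeta^0$ one solves $(1,\theta)\cdot\zeta=0$ analytically for $\theta$ --- explicitly by \r{5} when $n=2$, and for $n\ge3$ by fixing one spherical parameter $b$ and applying the implicit function theorem to get a local analytic diffeomorphism $((a,b),\xi)\mapsto\zeta$ --- then substitutes $\theta=\theta(\zeta)$ into the localized identity \r{13}/\r{19}, multiplies by $\chi_\delta(\zeta-\eta)\,e^{\i\lambda(\i(\zeta-\eta)^2/2-w\cdot\zeta)}$ and integrates in $\zeta$, so the only stationary phase needed is Sj\"ostrand's standard one in $\zeta$ with the quadratic phase $\Phi=(z-w)\cdot\zeta+\i(\zeta-\eta)^2/2$ and its unique nondegenerate critical point $\zeta_c=\eta+\i(z-w)$. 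The exponential (rather than rapid) smallness of the error terms comes not from an almost-analytic cutoff, as you suggest, but from the family $\chi_N$ with $|\partial^\alpha\chi_N|\le(CN)^{|\alpha|}$ and the choice $N\sim\lambda/(Ce)$. Finally, the ellipticity for $n\ge3$ is obtained by discrete perturbations $\Theta_k=\{(1,\theta(\pm a,\pm b)),(1,\theta(0,b))\}$ whose matrix has nonzero determinant, exactly the ``discrete samples instead of differentiation'' device you allude to; that part of your plan is consistent with the paper, but it only becomes available after the FBI estimate has actually been established.
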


\begin{proof}
	i) Let first $f\in \mathcal{C}^1(\R^{1+2}).$ By assumption $Lf=0$ only near $\gamma_{x_0,\theta_0}$, therefore a localization is needed. We choose a local chart for the lines close to $\gamma_{x_0,\theta_0}$, and without loss of generality we may assume that $x_0=0$ and $\theta_0=\pm \e_2$. So we have $\gamma_0=\gamma_{0,\e_2}=(s,0,s)$.

	Since $Lf=0$, we have $(v.\nabla_x)Lf=0$ for $v \in \R^n.$  Let $v$ be an arbitrary fixed vector in $\R^n$ and $\zeta^0\not=0$ be a space-like vector conormal to $\gamma_0$ at $x_0=0$ with property $(1,\theta_0)\bot \zeta^0$. Applying the Lorentz transformation, we may assume that $\zeta^0=\e^{1}:=(0,1,0)\in \R^{1+2}$. Our goal is to show that $(0,\zeta^0)\not \in \WF_A(\curl f)$.

	Let $\chi_N\in C_0^\infty(\R^2)$ be supported in $B(0,\eps)$, with $\eps>0$ and $\chi_N=1$ near $x_0=0$ so that
\be{12}
	|\partial_{x}^\alpha\chi_N|\le (CN)^{|\alpha|}, \quad \text{for $|\alpha|\le N$}.
\ee
	Then for $0<\eps\ll1$, $\lambda>0$, and $\theta$ near $\theta_0$,
	\[
	0= \int e^{\i \lambda x\cdot\xi} (\chi_N (v.\nabla_x)Lf)(x,\theta)\,\d x=
	\iint e^{\i\lambda x\cdot\xi} \chi_N (x)  \tilde{f}_v(\gamma_{x,\theta}(s))\cdot(1,\theta) \ \d s \ \d x.
	\]
	If $(1,\theta)\cdot\zeta=0$ with $\zeta=(\tau,\xi)$, then $\gamma_{x,\theta}\cdot\zeta=(s,x+s\theta)\cdot\zeta=x\cdot\xi$. Performing a change of variable $z=\gamma_{x,\theta}$ in above integral yields to
\be{13}
	0= \int e^{\i \lambda x\cdot\xi} (\chi_N (v.\nabla_x)Lf)(x,\theta) \d x=
	\int e^{\i\lambda x(z,\theta)\cdot\xi} a_N (z,\theta) \tilde{f}_{i_v}(z)(1,\theta)^i \d z
\ee
$$	
	=\int e^{\i\lambda z\cdot\zeta} a_N (z,\theta) \tilde{f}_{i_v}(z)(1,\theta)^i \d z,
$$
	when $(1,\theta)\cdot\zeta=0$. Notice that $a_N(0,\theta)=1$. 

	Now let $\zeta$ be a space-like vector near $\zeta^0$ and set $\theta=\theta_\pm(\zeta)$ (see \r{5}). Plugging $\theta_\pm(\zeta)$ into \r{13}, we get
\be{14}
	\int e^{\i\lambda z\cdot\zeta} \tilde{a}_N (z,\zeta) \tilde{f}_{i_v}(z)(1,\theta_{\pm}(\zeta))^i\,\d z=0, \quad \text{near $\zeta=\e^{1}$}.
\ee
	Here $\tilde{a}_N (z,\zeta)=a_N (z,\theta)$ where $\tilde{a}_N (0,\zeta)=1$. Note also that for $\zeta\approx\zeta_0$, we have $\theta(\zeta) \approx \theta_0$.

	In the next step, we apply the complex stationary phase method [of Sj\"ostrand \cite{38}, similar to the case where it is applied to the Calder\'{o}n problem with partial data in \cite{20} and to the integral geometry problem in \cite{11, 44}.] We need to analyze the phase function and its critical points.

	Fix $0<\delta\ll1$ and let $\chi_\delta$ be the characteristic function of the unit ball $B(0,\delta)$ in $\mathbf{R}^{1+2}$. With some $w$, $\eta\in \mathbf{R}^{1+2}$ close to $w=0$, $\eta=\e^{1}$, multiply the LHS of \r{14} by
$$
	\chi_\delta(\zeta-\eta) e^{ \i \lambda(\i ( \zeta-\eta)^2/2 -  w\cdot\zeta) }
$$
	and integrate w.r.t.\ $\zeta$ to get
\be{15}
	\iint e^{\i\lambda\Phi(z,w,\zeta,\eta)} b_N(z,\zeta,\eta)\tilde{f}_{i_v}(z)(1,\theta_{\pm}(\zeta))^i\,\d z\, \d \zeta=0,
\ee
	where $b_N= \chi_\delta(\zeta-\eta)\tilde{a}_N$ is a new amplitude and
	\[
	\Phi = (z-w)\cdot\zeta +  \i (\zeta-\eta)^2/2.
	\]
	Consider the phase function $\zeta \rightarrow \Phi$. If $w=z$, there is a unique real critical point $\zeta_c =\eta$, with property $\Im \Phi_{\zeta \zeta}> 0$ at $\zeta=\zeta_c$. For $w\not=z$, the phase $\Phi$, as function of $\zeta$, has a unique critical point $\zeta_c =\eta +\i (z-w). $
	We now split the $z$ integral in \r{15} into two parts: over the set $\Sigma=\{z;\; |z-w|\leqslant\delta/C^0\}$, for some $C^0>1$, and then over the complement of $\Sigma$. Since $|\Phi_\zeta|$ has a ($\delta$-dependent) positive lower bound for $z \in \Sigma$( for $\zeta$ real) and there is no real critical point for the function $\zeta \rightarrow \Phi$ in this set, we can estimate that part of integral. Using the estimate \r{12}, integration by parts $N$-times w.r.t.\ $\zeta$, and the fact that on the boundary $|\zeta-\eta|=\delta$, the factor $e^{\i\lambda \Phi}$ is exponentially small with $\lambda$, we get
$$
	\Big|   \iint_{\Sigma^c}  e^{\i\lambda\Phi(z,w,\zeta,\eta)} b_N(z,\zeta,\eta )\tilde{f}_{i_v}(z)(1,\theta_{\pm}(\zeta))^i\,\d z\,
	\d \zeta     \Big| \le C(CN/\lambda)^N + CNe^{-\lambda/C}.
$$
	Note also that in the estimation above we used the fact that
$$
	e^{\i\lambda \Phi}=\frac{\bar{\Phi}_{\zeta}\cdot \partial_\zeta
	}{i\lambda |\Phi_\zeta|^2} e^{\i\lambda \Phi}.
$$
	Now on the set $\{z;\; |z-w|\leqslant\delta/\tilde{C}\}, \tilde{C}\gg1$, we apply the complex stationary phase method for the rest of $\zeta$-integral in \r{15}. To estimate \r{15} for $z\in \Sigma,$ we set: $\psi(z,w,\eta) = \Phi|_{\zeta=\zeta_c}.$
	Therefore,
	\[
	\psi = \eta\cdot (z-w) + \i |z-w|^2 -\frac{\i}2|z-w|^2 =
	\eta\cdot (z-w)+\frac{\i}2 |z-w|^2 .
	\]
	Clearly the new phase function $\psi(z,w,\eta)$ satisfies
$$
	\psi_z(z,z,\zeta)=\zeta, \quad \psi_w(z,z,\zeta)=-\zeta, \quad \psi(z,z,\zeta)=0.
$$
	For $(z,\zeta)$ close to $(0,\e^{1})$, we use this phase function and apply [Theorem~2.8, \cite{38}] and the remark after it to the $\zeta$-integral above to get
$$
	\iint_{\Sigma}  e^{\i\lambda\Phi_\mp(z,w,\zeta,\eta)} b_N(z,\zeta,\eta )\tilde{f}_{i_v}(z)(1,\theta_{\pm}(\zeta))^i\,\d z\,
	\d \zeta
$$
$$
	= \int_{\Sigma}  e^{\i\lambda\Phi(z,w,\zeta_c,\eta)} b_N(z,\zeta_c,\eta )\tilde{f}_{i_v}(z)(1,\theta_\pm(\zeta_c))^i\d z
$$
$$
	=
	\int_{\Sigma}  e^{\i\lambda\psi(z,w,\eta)} b_\lambda(z,w,\eta)\tilde{f}_{i_v}(z)(1,\theta_\pm(z,w,\eta))^i\d z
$$
$$
	= \int_{\Sigma}  e^{\i\lambda\psi(z,\beta)} b_\lambda(z,\beta)\tilde{f}_{i_v}(z)(1,\theta_\pm(z,\beta))^i\d z
$$
\be{16}
	=
	\int_{\Sigma}  e^{\i\lambda\psi(z,\beta)}\tilde{f}_{i_v}(z) \tilde{B}_{\lambda_\pm} ^i(z,\beta)\d z
	= \mathcal{O} (\lambda^{n/2}(CN/\lambda)^N + CNe^{-\lambda/C})
\ee
	where $\beta=(w,\eta)$, and $\tilde{B}_{\lambda_\pm}$ is a classical elliptic analytic symbol of order $0$ with principal part equal
$$
	\sigma_p(\tilde{B}_{\lambda_\pm}(z,z,\zeta))\equiv (1,\theta_\pm(\zeta)), \quad \text{ up to an elliptic factor near $(z,\beta)= (0,0,\e^{1})$},
$$
	with $\theta_\pm(\zeta)=(\theta_{+} ^1(\zeta),\theta_{+} ^2(\zeta))$ defined by \r{5}. In particular, for $(z,w,\zeta)=(0,0,\e^1)$ we have
$$
	\sigma_p(\tilde{B}_{\lambda_\pm}(0,0,\e^1))\equiv (1,\theta_\pm(\e^1))=(1,0,\pm 1)=(1,\pm \e_2), \quad \text{ up to an elliptic factor}.
$$
	For $z\in \Sigma$ with $\delta\ll1$ and $|w|\ll1$, $\eta$ close to $\e^{1}$, the variable $(z,\beta)$ in \r{16} is near $(0,0,\e^{1})$ and therefore $\tilde{B}_{\lambda_\pm}$is independent of $N$ as $\chi_N=1$ near the origin. Choose now $N$ so that  $N\le \lambda/(Ce)\le N+1$ to get the following exponential error on the right,
\be{17}
	\int_{\Sigma}  e^{\i\lambda\psi(z,\beta)} \tilde{f}_{i_v}(z) \tilde{B}_{\lambda_\pm} ^i(z,\beta)\d z = \mathcal{O} (e^{-\lambda/C}).
\ee

\textbf{Microlocal Ellipticity.}
	Now we show that for $(1,\theta_\pm(\zeta))$, equations in 	\r{17} form an elliptic system of equations at $(0,0,\zeta^0).$ Let $(z,z,\zeta)$ near $(0,0,\zeta^0)$ and $v$ be fixed, and consider the principal symbols $\sigma_p(\tilde{B}_{\lambda_\pm} (z,z,\zeta))\equiv (1,\theta_\pm(\zeta))$. Microlocal version of ellipticity is equivalent to show that for a constant vector field $\tilde{f}_v=(\tilde{f}_{0_v},\tilde{f}_{1_v},\tilde{f}_{2_v}),$
$$
	(1,\theta_\pm(\zeta))^i \tilde{f}_{i_v}=0
$$
	forms an elliptic system. Above equations can be written as
$$
	\left\{\begin{array}{ll}
	\tilde{f}_{0_v}+\frac{1}{|\xi|^2}(-\tau\xi^1 + \sqrt{-\tau^2+|\xi|^2}\xi^2) \tilde{f}_{1_v}+\frac{1}{|\xi|^2}(-\tau\xi^2 - \sqrt{-\tau^2+|\xi|^2}\xi^1) \tilde{f}_{2_v}=0,\\
	\tilde{f}_{0_v}+\frac{1}{|\xi|^2}(-\tau\xi^1 - \sqrt{-\tau^2+|\xi|^2}\xi^2) \tilde{f}_{1_v}+\frac{1}{|\xi|^2}(-\tau\xi^2 + \sqrt{-\tau^2+|\xi|^2}\xi^1) \tilde{f}_{2_v}=0.\\
	\end{array}
	\right.\
$$
	By similar arguments as it is shown in Proposition 3.1 for $n=2$, one may conclude that
$$
	\left\{\begin{array}{ll}
	0=\xi^2 \tilde{f}_{1_v}-\xi^1 \tilde{f}_{2_v}=(\xi \cdot v) (\partial_2 {f}_1 - \partial_1 {f}_2) \\
	0 \ =\tau \tilde{f}_{1_v}-\xi^1 \tilde{f}_{0_v}=(\xi \cdot v) (\partial_0 {f}_1 - \partial_1 {f}_0) \\
	\end{array}
	\right.\ \Longrightarrow \ \partial_2 {f}_1 - \partial_1 {f}_2=\partial_0 {f}_1 - \partial_1 {f}_0=0.
$$
	Clearly $\partial_2 {f}_1 - \partial_1 {f}_2=0$ implies that $\tilde{f}_{1_v}=\tilde{f}_{2_v}=0$ (for definition of $\tilde{f}_{i_v}$ see Proposition 2.1) and therefore
$$
	\tilde{f}_{0_v}=v^1(\partial_1 f_0 - \partial_0 f_1) + v^2(\partial_2 f_0- \partial_0f_2)=0.
$$
	Since $v$ is arbitrary in $\R^2$, any two linearly independent vectors $v_1$ and $v_2$ implies that $\partial_2 f_0- \partial_0f_2=0$. Therefore, the equation \r{17} leads to the following system of equations
$$
	\left\{\begin{array}{ll}
	\int_{\Sigma}  e^{\i\lambda\psi(z,\beta)} [v^1(\partial_1f_0-\partial_0f_1)(x) + v^2(\partial_2f_0-\partial_0f_2)(x)] B_{\lambda} ^0 (z,\beta)\,\d z\,
	= \mathcal{O} (e^{-\lambda/C})\\
	\int_{\Sigma}  e^{\i\lambda\psi(z,\beta)} [v^2 (\partial_2f_1-\partial_1f_2)] B_{\lambda} ^1 (z,\beta)\,\d z\,
	= \mathcal{O} (e^{-\lambda/C}) \\
	\int_{\Sigma}  e^{\i\lambda\psi(z,\beta)} [v^1 (\partial_1f_2-\partial_2f_1)] B_{\lambda} ^2 (z,\beta)\,\d z\,
	= \mathcal{O} (e^{-\lambda/C}), \\
	\end{array}
	\right.\
$$
	where up to an elliptic factor, we have
$$
	\sigma_p(B_{\lambda}^i(z,z,\zeta))\equiv \left\{\begin{array}{ll}
	\ \ 1 \quad \ \ \  \ \text{$i=0$} \\
	\theta_{+} ^1(\zeta) \quad \text{$i=1$} \\
	\theta_{+} ^2(\zeta) \quad \text{$i=2$.}\\
	\end{array}
	\right.
$$
	The vector $v$ is arbitrary in $\R^2$. Thus, for any choice of two linearly independent vectors, at $(z,z,\zeta)=(0,0,\zeta^0)$ above elliptic system of equations implies that $(0,\zeta^0)\not \in \WFA(\curl f)$ as desired. Notice that above system is an overdetermined system of equations since the term $\partial_1f_2-\partial_2f_1$ is repeated in the second and third equations. This is due to the property of $(v.\nabla_x)Lf$ and $\tilde{f}_v$ as we pointed out on Remark 2.2.

	Now if $f\in\mathcal{E}^\prime(\R^{1+2})$ is a distribution, as stated in the Theorem 4.1, the result still holds in the sense of distributions. In fact, one may take a sequence of $\mathcal{C}^1$-smooth functions which converges to the distribution $f$. The equation \r{14} holds for each smooth function. Now the $z$-integral in \r{14} can be thought in the sense of distributions as the integrand can be considered as the action of a distribution on a smooth function.

	ii) Let first $f\in \mathcal{C}^1(\R^{1+n}).$ By assumption, $Lf=0$ only near $\gamma_{x_0,\theta_0}$, so we choose a local chart for the lines close to $\gamma_{x_0,\theta_0}$. Since $Lf=0$, we have $(v.\nabla_x)Lf=0$ for $v \in \R^n.$  Let $v$ be an arbitrary fixed vector in $\R^n$ and let $x_0=0$ and $\theta_0=\pm \e_n$. Our goal is to show $(0,\zeta^0)\not \in \WFA(\d f)$ for $\zeta^0$ a non-zero space-like vector and conormal to $\gamma_0$ at $x_0=0$. Applying the Lorentz transformation, we may assume that $\zeta^0=\e^{n-1}:=(0,\dots,0,1,0)\in \R^{1+n}$. Let $\chi_N\in C_0^\infty(\R^n)$ be supported in $B(0,\eps)$, with $\eps>0$ and $\chi_N=1$ near $x_0=0$ so that
\be{18}
	|\partial_{x}^\alpha\chi_N|\le (CN)^{|\alpha|}, \quad \text{for $|\alpha|\le N$}.
\ee
	Then for $0<\eps\ll1$, $\lambda>0$, and $\theta$ near $\theta_0$,
	\[
	0= \int e^{\i \lambda x\cdot\xi} (\chi_N (v.\nabla_x)Lf)(x,\theta)\,\d x=
	\iint e^{\i\lambda x\cdot\xi} \chi_N (x)  \tilde{f}_v(\gamma_{x,\theta}(s))\cdot(1,\theta) \ \d s  \ \d x.
	\]
	Similar to the first part of theorem, we make a change of variable $z=\gamma_{x,\theta}$ to get
\be{19}
	0= \int e^{\i \lambda x\cdot\xi} (\chi_N (v.\nabla_x)Lf)(x,\theta) \d x=
	\int e^{\i\lambda x(z,\theta)\cdot\xi} a_N (z,\theta) \tilde{f}_{i_v}(z)(1,\theta)^i \d z
\ee
$$	
	=\int e^{\i\lambda z\cdot\zeta} a_N (z,\theta) \tilde{f}_{i_v}(z)(1,\theta)^i \d z,
$$
	when $(1,\theta)\bot \zeta$. Notice that $a_N(0,\theta)=1$.

	Let $a_1, a_2, \dots, a_{n-1},$ be $n-1$ non-zero parameters near zero. We set $\theta(a_1, a_2, \dots,\break a_{n-1})$ to be the $n$-dimensional spherical coordinates where
$$
	\left\{\begin{array}{ll}
	\theta^1=\sin (a_{n-1}) \sin (a_{n-2}) \sin (a_{n-3}) \dots \sin (a_4) \sin (a_3) \sin (a_2) \sin (a_1) \\
	\theta^2=\sin (a_{n-1}) \sin (a_{n-2}) \sin (a_{n-3}) \dots \sin (a_4) \sin (a_3) \sin (a_2) \cos (a_1)\\
	\theta^3=\sin (a_{n-1}) \sin (a_{n-2}) \sin (a_{n-3}) \dots \sin (a_4) \sin (a_3) \cos (a_2)\\
	\theta^4=\sin (a_{n-1}) \sin (a_{n-2}) \sin (a_{n-3}) \dots \sin (a_4) \cos (a_3)\\
	\vdots \\
	\theta^{n-2}=\sin (a_{n-1}) \sin (a_{n-2})\cos (a_{n-3})\\
	\theta^{n-1}=\sin (a_{n-1})\cos (a_{n-2})\\
 	\theta^{n}=\cos (a_{n-1})\\
 	\end{array}
	\right.\
$$
	Clearly $\big|\theta(a_1, a_2, \dots, a_{n-1})\big|=1$, $\theta(a_1, a_2, \dots, 0)=\e_n$. Considering the $n$-dimen\-sional spherical coordinate, one may solve the equation $(1,\theta)\cdot\zeta=0$ for $\zeta=(\tau,\xi)$ to get
$$
	\zeta((a_1, a_2, \dots, a_{n-1}),\xi)=(-\theta(a_1, a_2, \dots, a_{n-1})\cdot\xi,\xi).
$$
	To simplify our analysis, we show the rest of proof for $n=3$. For $n>3$, one may repeat the following arguments to conclude the result. Let
$$
	\theta(a,b)= \sin(a)\sin(b)\e_1 + \sin(a)\cos(b)\e_2+ \cos(a)\e_3
$$
	be the $3$-dimensional spherical coordinates. Plugging $\theta(a,b)$ into \r{19} we get
	$$
	\int e^{\i\lambda \phi(z,\zeta)} a_N (z,\theta(a,b))  \tilde{f}_{i_v}(z)(1,\theta(a,b))^i\,\d z=0, \quad \text{near $a=0$},
	$$
	where $\phi(z,\zeta)=z\cdot\zeta((a,b),\xi)$ and $a_N (z,\theta(a,b))=\chi_N(x-s\theta(a,b))$ with $a_N (0,(a,b))\break=1$. Note that
	$$
	\phi_z(0,\zeta)=\zeta, \quad \phi_{z\zeta}=\Id.
	$$
	It is more convenient to work with $\zeta$ variable instead of $((a,b),\xi)$. So let $b$ be a non-zero fixed parameter near zero. We show that the map $((a,b),\xi)\rightarrow \zeta \in \R^{1+3}$ is a local analytic diffeomorphism near $((0,b),\e^{2})$. More precisely, the determinant of Jacobean associated with the map $((a,b),\xi)\rightarrow \zeta \in \R^{1+3}$ is
$$
	-\cos(a)\sin(b)\xi^{1}-\cos(a)\cos(b)\xi^{2}+\sin(a)\xi^3,
$$
	which is equal to $-\sin(b)\xi^{1}-\cos(b)\xi^{2}$ near $a=0$. Now the fixed parameter $b$ (near zero) and our choice of $\zeta^0$ imply that the determinant is $-\cos(b)$ which is non-zero. Hence, one may apply the Implicit Function Theorem near $a=0$ to locally invert the map to $\zeta \rightarrow  ((a,b),\xi)\in \R^{1+3}$. One may compute $a$ explicitly to get
$$
	a=a(\zeta)=-\tan^{-1}(\frac{\xi^3}{\sin(b)\xi^{1}+\cos(b)\xi^{2}})+ \sin^{-1} (-\frac{\tau}{\sqrt{(\sin(b)\xi^{1}+\cos(b)\xi^{2})^2+(\xi^{3})^2}})
$$
	which maps $a=0$ to $\zeta^0$ diffeomorphically. Notice that for the fixed parameter $b$  and $\zeta \approx \zeta^0$, $a(\zeta)$ is the unique solution of the equation
$$
	-\tau= -\theta(a,b)\cdot\xi =\sin(a)\sin(b)\xi^{1}+\sin(a)\cos(b)\xi^{2}+\cos(a)\xi^3,
$$
	near $a=0$. Therefore, we may work in the $\zeta$ variables instead of the $((a,b),\xi)$ to get
$$
	\int e^{\i\lambda z\cdot\zeta} \tilde{a}_N (z,\zeta) \tilde{f}_{i_v}(z)(1,\theta(\zeta))^i\,\d z=0, \quad 	\text{near $\zeta=\e^{2}$},
$$
	where $\tilde{a}_N (z,\zeta)=a_N (z,\theta(a,b))$ and $\tilde{a}_N (0,\zeta)=1$.

	In the next step, we analyze the phase function and its critical points. (A similar argument as in the first part of theorem by applying the complex stationary phase method of Sj\"ostrand)

	Fix $0<\delta\ll1$ and let $\chi_\delta$ be the characteristic function of the unit ball $B(0,\delta)$ in $\mathbf{R}^{1+3}$. With some $w$, $\eta\in \mathbf{R}^{1+3}$ close to $w=0$, $\eta=\e^{2}$, multiply the l.h.s.\ of above integral equation by
$$
	\chi_\delta(\zeta-\eta) e^{ \i \lambda(\i ( \zeta-\eta)^2/2 -  w\cdot\zeta)}
$$
	and integrate w.r.t.\ $\zeta$ to get
\be{20}
	\iint e^{\i\lambda\Phi(z,w,\zeta,\eta)} b_N(z,\zeta,\eta)\tilde{f}_{i_v}(z)(1,\theta(\zeta))^i\d z\, \d \zeta=0, \quad  \text{near $\zeta=\e^{2},$}
\ee	where $b_N= \chi_\delta(\zeta-\eta)\tilde{a}_N$ is a new amplitude and
	\[
	\Phi = (z-w)\cdot\zeta +  \i (\zeta-\eta)^2/2.
	\]
	Now consider the phase function $\zeta \rightarrow \Phi$. If $w=z$, there is a unique real critical point $\zeta_c =\eta$, which satisfies $\Im \Phi_{\zeta \zeta}> 0$ at $\zeta=\zeta_c$. For $w\not=z$, the phase $\Phi$, as function of $\zeta$, has a unique critical point $\zeta_c =\eta +\i (z-w). $

	Now we split the $z$-integral \r{20} into two parts: over $\Sigma=\{z;\; |z-w|\leqslant\delta/C^0\}$, for some $C^0>1$, and then over the complement of $\Sigma$. Since $|\Phi_\zeta|$ has a ($\delta$-dependent) positive lower bound for $|z-w|>\delta/C^0$( for $\zeta$ real) and there is no real critical point for the function $\zeta \rightarrow \Phi$ in this set, we can estimate that part of integral.
	Using the estimate \r{18}, integration by parts $N$-times w.r.t.\ $\zeta$, and the fact that on the boundary $|\zeta-\eta|=\delta$, the factor $e^{\i\lambda \Phi}$ is exponentially small with $\lambda$, we get
$$
	\Big|   \iint_{\Sigma^c}  e^{\i\lambda\Phi(z,w,\zeta,\eta)} b_N(z,\zeta,\eta )\tilde{f}_{i_v}(z)(1,\theta(\zeta))^i \d z\,
	\d \zeta     \Big| \le C(CN/\lambda)^N + CNe^{-\lambda/C}.
$$
	Similar to part (i), for above inequality we used the fact that
$$
	e^{\i\lambda \Phi}=\frac{\bar{\Phi}_{\zeta}\cdot \partial_\zeta
	}{i\lambda |\Phi_\zeta|^2} e^{\i\lambda \Phi}.
$$
	Now on the set $\{z;\; |z-w|\leqslant\delta/\tilde{C}\}, \tilde{C}\gg1$, we apply stationary phase method for the rest of $\zeta$-integral in \r{20}. To estimate \r{20} for $z\in \Sigma$, we set: $\psi(z,w,\eta) = \Phi|_{\zeta=\zeta_c}.$
	Therefore,
\[  \quad
	\psi = \eta\cdot (z-w) + \i |z-w|^2 -\frac{\i}2|z-w|^2 =
	\eta\cdot (z-w)+\frac{\i}2 |z-w|^2 .
\]
	Notice that $\psi(z,w,\eta)$ satisfies
\be{21}
	\psi_z(z,z,\eta)=\eta=\phi_z(0,\eta), \quad \psi_w(z,z,\eta)=-\eta=-\phi_z(0,\eta), \quad \psi(z,z,\eta)=0.
\ee
	For $(z,\zeta)$ close to $(0,\e^{2}),$ we use this phase function and apply [Theorem~2.8, \cite{38}] and the remark after it to the $\zeta$-integral above to get
$$
	\iint_{\Sigma}  e^{\i\lambda\Phi_\mp(z,w,\zeta,\eta)} b_N(z,\zeta,\eta )\tilde{f}_{i_v}(z)(1,\theta(\zeta))^i\,\d z\,
	\d \zeta
$$
$$	
	= \int_{\Sigma}  e^{\i\lambda\Phi(z,w,\zeta_c,\eta)} b_N(z,\zeta_c,\eta )\tilde{f}_{i_v}(z)(1,\theta(\zeta_c))^i\d z
$$
$$
	=
	\int_{\Sigma}  e^{\i\lambda\psi(z,w,\eta)} b_\lambda(z,w,\eta)\tilde{f}_{i_v}(z)(1,\theta(z,w,\eta))^i\d z
$$
$$
	= \int_{\Sigma}  e^{\i\lambda\psi(z,\beta)} b_\lambda(z,\beta)\tilde{f}_{i_v}(z)(1,\theta(z,\beta))^i\d z
$$
\be{22}
	=
	\int_{\Sigma}  e^{\i\lambda\psi(z,\beta)}\tilde{f}_{i_v}(z) \tilde{B}_{\lambda} ^i(z,\beta)\d z
	= \mathcal{O} (\lambda^{n/2}(CN/\lambda)^N + CNe^{-\lambda/C}).
\ee
	Here $\beta=(w,\eta)$ and $\tilde{B}_{\lambda}$ is a classical elliptic analytic symbol of order $0$.
	For $z\in \Sigma$ with $\delta\ll1$ and $|w|\ll1$, $\eta$ near $\e^{2}$, the variable $(z,\beta)$ in \r{22} is near $(0,0,\e^{2})$ and then $\tilde{B}_{\lambda}$ is independent of $N$ because $\chi_N=1$ near the origin. We choose $N$ so that $N\le \lambda/(Ce)\le N+1$. Therefore, we get the following exponential error on the right
$$
	\int_{\Sigma}  e^{\i\lambda\psi(z,\beta)} \tilde{f}_{i_v}(z)\tilde{B}_{\lambda} ^i(z,\beta)\d z\,
	= \mathcal{O} (e^{-\lambda/C}).
$$
	Since the phase function satisfies the properties in \r{21}, on a small neighborhood of $\zeta^0$, we perform the following change of variable in above integral equation:
$$
	(z,w,\eta)\longrightarrow(z,w,\zeta)=(z,w,\phi_z(w,\eta)),
$$
	which yields to
\be{23}
	\int_{\Sigma}  e^{\i\lambda\psi(z,w,\zeta)} \tilde{f}_{i_v}(z) \tilde{B}_{\lambda} ^i(z,w,\zeta)\d z
	= \mathcal{O} (e^{-\lambda/C}).
\ee
	Here $\tilde{B}_{\lambda}$ is a new classical elliptic symbol of order zero with the principal part of $\sigma_p(\tilde{B}_{\lambda}(z,z,\zeta))\equiv (1,\theta(\zeta)),$ up to an elliptic factor. In particular, for $(z,w,\zeta)=(0,0,\zeta^0)$ we have
$$
	\sigma_p(\tilde{B}_{\lambda}(0,0,\zeta^0))\equiv (1,\theta(\zeta^0))=(1,0,0,1)=(1,\e_3).
$$
	As it is shown above, the map $(a,\xi)\rightarrow \zeta$ is a local diffeomorphsim near $a=0$ (equivalently near $\zeta^0=\e^{2}$). Therefore, we work with the principal symbol in terms of $(a,\xi)$ instead, which means up to an elliptic factor
$$
	\sigma_p(\tilde{B}_{\lambda}(z,z,(a,\xi))\equiv (1,\theta(a,b)).
$$
	To show $(0,\zeta^0) \not \in \WFA(\d f)$, we need to form an elliptic system of equations using \r{23}. Let $(z,z,(a,\xi))\approx(0,0,(0,\xi^0))$ and $v$ be a fixed vector. For our goal, we slightly perturb $\theta(a,b)$ near $a\approx 0$ and $b$. Let
	\[
	\{\Theta_k\}_{k=0} ^3
	=\lbrace(1,\theta(a,b)),(1,\theta(-a,b)),(1,\theta(a,-b)),(1,\theta(0,b))\rbrace
	\]
	be the set of perturbations of $\theta(a,b)$, with property $\sigma_p(\tilde{B}_{\lambda_k}(z,z,(a,\xi))=\Theta_k$, for $k=0,1,2,3$.
	Microlocal version of ellipticity is equivalent to show that for a constant vector field $\tilde{f}_v=(\tilde{f}_{0_v},\tilde{f}_{1_v},\tilde{f}_{2_v},\tilde{f}_{3_v}),$
	$$
	\Theta \tilde{f}_v=0
	$$
	forms an elliptic system of equations. Here the matrix $[\Theta]_{4\times4}$ is the associated matrix with above principal symbols, $\Theta_k.$ The matrix $[\Theta]$ is invertible since its determinant equals to
{\small$$
	\det \begin{pmatrix}
	1&\sin(a)\sin(b)&\sin(a)\cos(b)&\cos(a)\\
	1&-\sin(a)\sin(b)&-\sin(a)\cos(b)&\cos(a)\\
	1&-\sin(a)\sin(b)&\sin(a)\cos(b)&\cos(a)\\
	1&0&0&1
	\end{pmatrix}
	= 4\sin^2(a)\sin(b)\cos(b)(1-\cos(a))
$$}which is non-zero for $a$ and our fixed parameter $b$ near zero. Therefore, $ \Theta\tilde{f}_v=0$ implies that $\tilde{f}_v=0.$ This means the equation \r{23} with $\{\Theta_k\}_{k=0} ^3$ leads to the following system of equations related to $\theta_0=\e_3$:
$$
	\left\{\begin{array}{ll}
	\int_{\Sigma}  e^{\i\lambda\psi(z,\beta)} \tilde{f}_{0_v} B_{\lambda} ^0 (z,\beta)\d z\,
	= \mathcal{O} (e^{-\lambda/C})\\
	\int_{\Sigma}  e^{\i\lambda\psi(z,\beta)} \tilde{f}_{1_v} B_{\lambda} ^1 (z,\beta)\d z\,
	= \mathcal{O} (e^{-\lambda/C}) \\
	\int_{\Sigma}  e^{\i\lambda\psi(z,\beta)} \tilde{f}_{2_v} B_{\lambda} ^2 (z,\beta)\d z\,
	= \mathcal{O} (e^{-\lambda/C}) \\
	\int_{\Sigma}  e^{\i\lambda\psi(z,\beta)} \tilde{f}_{3_v} B_{\lambda} ^3 (z,\beta)\d z\,
	= \mathcal{O} (e^{-\lambda/C}), \\
	\end{array}
	\right.\
$$
	where up to an elliptic factor, we get
	$$
	\sigma_p(B_{\lambda}^i(z,z,\zeta))\equiv \left\{\begin{array}{ll}
	1 \quad \quad \quad \quad \quad \ \ \text{$i=0$} \\
	\sin(a)\sin(b) \quad \text{$i=1$} \\
	\sin(a)\cos(b) \quad \text{$i=2$}\\
	\cos (a) \quad \quad \quad \ \ \text{$i=3$}. \\
	\end{array}
	\right.
$$
	Here $\tilde{f}_{i_v}$ is defined by Proposition 2.1. Also, the phase function $\psi$ satisfies the conditions \r{21} and $\Im \psi>C_0|z-w|^2$ as $\psi_{z\zeta}=\Id$. Note that, some components of $\d f$ are repeated in above equations. This forms an overdetermined system of equations for our fixed vector $v \in \R^3$. Since $v$ is arbitrary, for any choice of three linearly independent vectors $\{v_i\}_{i=1} ^3 \subseteq \R^3$, for  $(z,z,(z,\xi))=(0,0,(0,\xi^0))$ one may conclude that $(0,\zeta^0)\not \in \WF_A(\d f)$, which proves the second part of the theorem  for $n=3$. For the general case $n>3$, one needs to slightly perturb $\theta(a_1, a_2, \dots, a_{n-1})$ with respect to parameters $a_1, a_2, \dots, a_{n-1}$. This forms an elliptic system $\Theta \tilde{f}_v=0$ for the microlocal ellipticity discussion and therefore concludes the result. Now for any vector-valued distribution $f\in\mathcal{E}^\prime(\R^{1+n})$, as we pointed out in the proof of part $(i)$, the result remains true in the sense of distributions.
\end{proof}		

\begin{remark}
	\normalfont
	By Fundamental Theorem of Calculus, the potential field is in the kernel of operator $L$, so one could only hope to recover $\curl f$ for $n=2$. For the Riemannian case with dimension $n\ge 3$, foliation (slicing method) can be used to achieve the uniqueness results. One may restrict $x$ to a two-dimensional plane, say $\Pi =\{(t,x): x^3=\dots=x^n=\const\}$, and apply the results in Theorem 4.1 when $n=2$. This only recovers some components of the generalized $\curl$ of the vector field $f$ even if different permutations are chosen to fix different components of $x$. In order to recover all other components, one needs to perturb above two-dimensional planes. Therefore, such a slicing technique can be done as the transform is overdetermined. However, additional assumption which is the information of light-ray for two discrete directions $(1,\pm\theta)$ is required. Even though the foliation method is a simpler approach for the recovery of the vector field $f$, we do not perform foliation to achieve stronger results.
\end{remark}

\section{Proof of main results}
	For our main result we need the following lemma which is a unique analytic continuation result across a time-like hypersurface in the Minkowski time-space.

\begin{lemma}
	Let $f \in \mathcal{C}^\infty (\R^{1+n})$ and let $\gamma_{x_0,\theta_0}$ be a fixed light-like line in the Minkowski time-space so that $\gamma_{x,\theta}$ does not intersect $\supp f$  for $|s| \ge 1/C$ with some C for all $(x,\theta)$ near $(x_0,\theta_0)$. Fix $z_0 = (s_0 ,x_0 + s_0 \theta_0) \in \gamma_{x_0,\theta_0}$, let $S$ be an analytic time-like hypersurface near $z_0$ and assume that $\gamma_{x_0,\theta_0}$ is tangent to $S$ at $z_0$.
	
	i) For $n=2$, if $L f(x,\theta) = 0$ near $(x_0,\pm\theta_0)$ and if $\curl f=0$ on one side of $S$ near $z_0$, then $\curl f=0$ near $z_0$.
	
	ii) For $n\ge 3$, if $L f(x,\theta) = 0$ near $(x_0,\theta_0)$ and if $\d f=0$ on one side of $S$ near $z_0$, then $\d f=0$ near $z_0$.
\end{lemma}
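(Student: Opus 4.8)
The plan is to reduce the lemma to Theorem~4.1 (microlocal recovery of space-like analytic singularities from $Lf$) together with an analytic unique continuation theorem across an analytic hypersurface, along the lines of \cite{39}. Set $h:=\curl f$ if $n=2$ and $h:=\d f$ if $n\ge 3$. In either case $h$ is a smooth, vector- (resp.\ form-)valued function on $\R^{1+n}$ which, by hypothesis, vanishes identically on one side of $S$ near $z_0$; recall also that $\WFA(h)$ is the union of the analytic wave front sets of the components of $h$. It therefore suffices to show that the conormal covector of $S$ at $z_0$ does not belong to $\WFA(h)$ and then to apply, componentwise, Kashiwara's watermelon theorem (the analytic microlocal form of Holmgren's uniqueness theorem): if a distribution vanishes on one side of an analytic hypersurface $S$ near $z_0$ and its analytic wave front set does not contain the conormal of $S$ at $z_0$, then it vanishes in a full neighborhood of $z_0$.

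The key step is to identify the conormal $\nu:=N^*_{z_0}S$. Since $S$ is timelike, its conormal at every point, and in particular $\nu$, is a space-like covector. Since $\gamma_{x_0,\theta_0}$ is tangent to $S$ at $z_0$, the null direction $(1,\theta_0)$ of the light-ray lies in $T_{z_0}S$, so $\nu$ annihilates $(1,\theta_0)$; that is, $\nu$ (and likewise $-\nu$) is a space-like covector conormal to $\gamma_{x_0,\theta_0}$ at $z_0$. The remaining hypotheses of the lemma---that $\gamma_{x,\theta}$ does not meet $\supp f$ for $|s|\ge 1/C$ and all $(x,\theta)$ near $(x_0,\theta_0)$, and that $Lf(x,\theta)=0$ near $(x_0,\theta_0)$ (and near $(x_0,-\theta_0)$ when $n=2$)---are exactly the hypotheses of Theorem~4.1. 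Applying that theorem, $\WFA(h)$ contains no space-like covector conormal to $\gamma_{x_0,\theta_0}$ (resp.\ to $\gamma_{x_0,\pm\theta_0}$ when $n=2$); in particular $\pm\nu\notin\WFA(h)$.

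With both co-orientations of the conormal of $S$ at $z_0$ excluded from $\WFA(h)$, the watermelon theorem then yields $h=0$ in a neighborhood of $z_0$, which is the assertion of the lemma for both $n=2$ and $n\ge 3$. The main obstacle is precisely the geometric identification of the previous paragraph: one must use that $S$ is timelike to make $\nu$ space-like, and that $\gamma_{x_0,\theta_0}$ is tangent to $S$ at $z_0$ to make $\nu$ conormal to $\gamma_{x_0,\theta_0}$, so that the conormal of $S$ falls exactly inside the cone of covectors controlled by Theorem~4.1; were the light-ray transversal to $S$ at $z_0$, this would fail. The dimensional dichotomy also originates here: in $\R^{1+2}$ a space-like covector conormal to $\gamma_{x_0,\theta_0}$ is unique up to scaling and is equally conormal to $\gamma_{x_0,-\theta_0}$, which is why Theorem~4.1---and hence this lemma---needs $Lf$ to vanish near both $(x_0,\theta_0)$ and $(x_0,-\theta_0)$ when $n=2$, whereas for $n\ge 3$ that same conormal is conormal to a full neighborhood of directions around $\theta_0$, so a single direction suffices. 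Once the identification is in place, the watermelon theorem is invoked as a black box and no further estimates are required.
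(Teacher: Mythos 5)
Your argument is correct and is essentially the paper's proof: both reduce the lemma to Theorem~4.1 combined with the Sato--Kawai--Kashiwara (watermelon) theorem, using that tangency of $\gamma_{x_0,\theta_0}$ to the timelike surface $S$ makes the conormal of $S$ at $z_0$ a space-like covector conormal to the ray, which Theorem~4.1 excludes from $\WFA(\curl f)$ resp.\ $\WFA(\d f)$. The only difference is presentational: the paper argues by contradiction, taking some $(z_0,\zeta_0)\in\WFA$ of a component, sliding it along the conormal direction and passing to the limit to place $(z_0,\nu(z_0))$ in $\WFA$, which is precisely the microlocal Holmgren corollary you invoke componentwise as a black box.
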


\begin{proof}
	Let $n \ge 3$ and assume that $z_0 \in \supp \d f$. By assumption $(v.\nabla_x)Lf(x,\theta)=0$ near $(z_0,\theta_0)$. Since $\d f$ is non-zero only in half space $S$, then there exists $\zeta_0$ such that $(z_0,\zeta_0) \in \WFA (\d f)$, as $\d f$ cannot be analytic at $z_0$. By the definition of analytic wave front set for vector-valued distrubitions, there exist a component of $\d f$, say $f_{ij}=\partial_jf_i-\partial_if_j$, where $(z_0,\zeta_0) \in \WFA (f_{ij})$. In other words, if the half space $S$ intersects the $\supp \d f$, it must intersect at least one of the components of $\d f$, say $f_{ij}$, as $\d f$ cannot be analytic at the intersection point. Now by Sato-Kawai-Kashiwara Theorem (see \cite{36, 38}), $(z_0 ,\zeta_0 + s \nu(z_0)) \in  \WFA (f_{ij})$, where $\nu(z_0)$ is one of the two unit conormals to $S$ at $z_0$. This in turn implies that $(z_0 ,\frac{\zeta_0}{s} + \nu(z_0)) \in  \WFA (f_{ij})$ as the wave front set is a conic set. Now by passing to limit, we have $(z_0 , \nu(z_0)) \in  \WFA (f_{ij})$ since the analytic wave front set is closed. By assumption on $S$, that vector is space-like and is conormal to $\gamma^{\prime˙} _{x_0,\theta_0} (s_0)$. This contradicts Theorem 4.1 part $(ii)$, which implies that $\d f=0$ and completes the proof. For $n=2$, one may repeat above arguments and use the first part of Theorem 4.1 to conclude the result.
\end{proof}
	We now are ready to state the proof of the main result.
\begin{proof}[\textit{Proof of Theorem 2.1.}]
	Let $n\ge 3$. By assumption $(v.\nabla_x)Lf(x,\theta)=0$. The proof follows from [Theorem~2.1, \cite{39}] replacing $Lf(x,\theta)$ by $(v.\nabla_x)Lf(x,\theta)$ and applying the second part of Lemma 5.1.
 
 To conclude the result for $n=2$, one may use Lemma 5.1 part $(i)$ and repeat the proof of [Theorem~2.1, \cite{39}].
\end{proof}

\section{Examples}
	In the following examples we illustrate how our result imply the recovery of vector field up to a smooth potential field.
\begin{example}
	\normalfont
	Let $f$ be a vector field (distribution) supported in cone $\lbrace (t,x)\in \R^{1+n} | \ |x|<c|t|\rbrace$ and let $\Gamma_{\rho_0}$ be the following surface:
$$
	\Gamma_{\rho_0}= \lbrace(t,x)\in \R^{1+n} \ | \ \psi(t,x)=|x-x_0|^2 - c^2|t-t_0|^2 - \rho^2 _0=0\rbrace,
$$
	for some $\rho_0 \geq 0 $ and  $0 < c< 1$. Assume now that $f$ integrates to zero over all light-like lines $\gamma$ in the exterior of $\Gamma_{\rho_0}$, $\ext (\Gamma_{\rho_0})$.

	We show the vector field $f$ can be recovered up to a potential field in the $\ext (\Gamma_{\rho_0})$. By definition $(z,\zeta)$ is conormal to $\Gamma_{\rho_0}$ if and only if
{\small$$
	(z,\zeta)\in N^*\Gamma_{\rho_0} = \lbrace(t,x,\tau,\xi)\in T^*(\R^{1+n} \times \R^{1+n})| (t,x)\in \Gamma_{\rho_0} , (\tau,\xi)=0 \text{ on $T_{(t,x)} \Gamma_{\rho_0}$} \rbrace.
$$}Clearly the gradient of $\psi$, $\nabla \psi$, is normal to surface $\Gamma_{\rho_0}$ at $(t,x)$. So by definition $\sigma \nabla \psi \cdot (dt, dx)$ is the conormal vector to surface $\Gamma_{\rho_0}$ at $(t,x)$. In fact, to find the conormal we compute the total differential of $\psi$, $d\psi$:
$$
	d\psi(t,x)= -2 c^2 (t-t_0) dt + 2 (x-x_0) dx,
$$
	and therefore the covector:
$$
	(z,\zeta)=(t,x,\tau,\xi) =(t,x,-2 \sigma c^2 (t-t_0), 2\sigma (x-x_0)) \in N^* \Gamma_{\rho_0}, \quad \text{ for $\sigma \in \R$},
$$
	is conormal to $\Gamma_{\rho_0}$. Clearly the $\zeta=(\tau,\xi)$ is space-like in the $\ext (\Gamma_{\rho_0})$ as it is easy to show $|\tau|= 2\sigma c^2|t-t_0|\leq 2\sigma|x-x_0|=|\xi|$ in the $\ext(\Gamma_{\rho_0})$. Therefore, for any decreasing family of $\rho$ with the property $\rho \rightarrow \rho_0$, the surfaces $\Gamma_\rho$ will be a family of analytic time-like hypersurfaces in the $\ext(\Gamma_{\rho_0})$.
	
	Let  $\sigma=\frac{1}{2}$, $(t_0,x_0)=(0,0)$, and fixed $\rho>0$ be the smallest one with the property that $\supp f \cap \Gamma_{\rho} \not = \emptyset$ ($\supp f\cap \Gamma_\rho$ is a compact set.)
	Now assume that $\gamma_0$ is tangent to $\Gamma_{\rho}$ at $z_0$ (i.e. $(z_0,\zeta_0)$ is conormal to $\dot{\gamma_0}$). By compactness of $\supp f$, we have $\gamma \not \in \supp f$ for any $\gamma$ approaching $\gamma_0$ in the $\ext(\Gamma_{\rho_0})$. Theorem 4.1 implies $\WFA(\d f)$ contains no space-like vector conormal to $\dot{\gamma_0}$ since by assumption $Lf=0$ over all light-like lines $\gamma$ near $\gamma_0$ on one side of analytic time-like hypersurface $\Gamma_{\rho}$, see Figure 1. Now using the analytic continuation result, Lemma 5.1, we can recover the vector field $f$ up to a potential field.
	\begin{figure}[H]
	\vspace*{-8pt}	\includegraphics[scale=0.3]{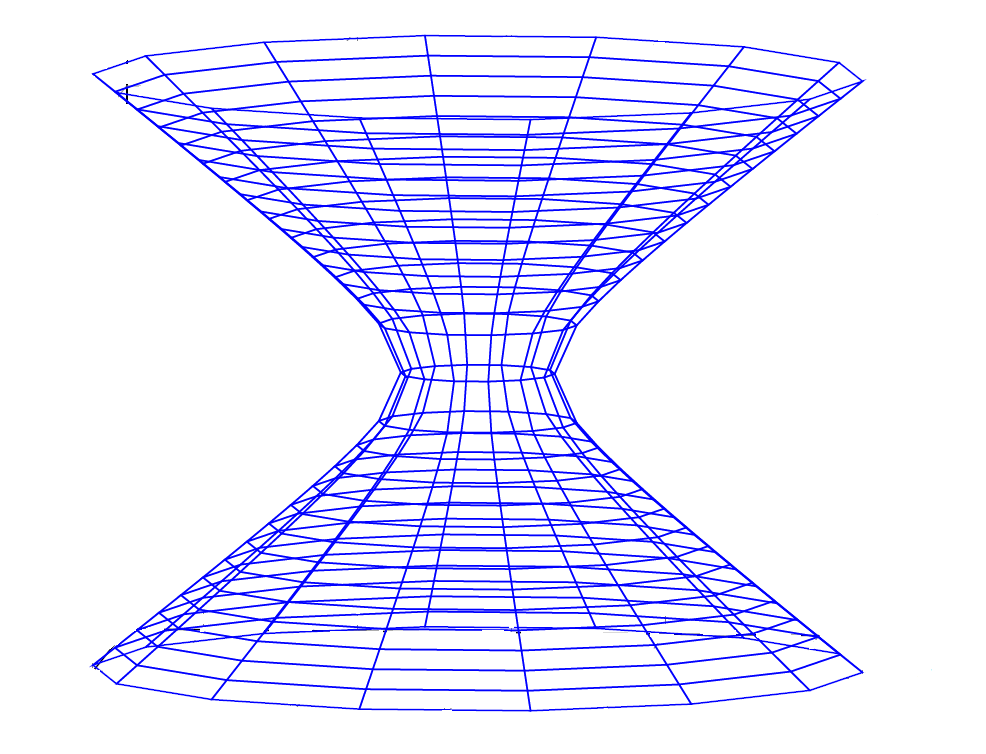}
		\caption{$\Gamma_{\rho_0}$ with $0<c< 1$.}\vspace*{-8pt}
	\end{figure}
\end{example}

\begin{example}
	\normalfont
	Let $f \in \mathcal{E}'(\R^{1+n})$ be so that $\supp f$ expands with a speed less than one and let $\Gamma_0$ be the following surface:
$$
	\Gamma_0 = \lbrace(t,x)\in \R^{1+n} \ | \ \psi(t,x)=(|x-x_0|-R)^2 - c^2|t-t_0|^2 =0 \rbrace, \quad \text{for some $0<c<1$.}
$$
	Assume now that $f$ integrates to zero over all light-like lines $\gamma$ intersecting $\supp f$ outside of the surface $\Gamma_0$. We show the vector field $f$ can be recovered up to a potential field in the $\ext (\Gamma_{\rho})$, where the surface $\Gamma_\rho$ with $\rho>0$ is defined as follow:
$$
	\Gamma _\rho = \lbrace(t,x)\in \R^{1+n} \ | \ \psi(t,x)=(|x-x_0|-R)^2 - c^2|t-t_0|^2 - \rho^2=0 \rbrace.
$$
	By definition, the covector:
$$
	(z,\zeta)=(t,x,\tau,\xi) =(t,x,-2 \sigma c^2 (t-t_0), 2\sigma \frac{x-x_0}{|x-x_0|}(|x-x_0|-R)) \in N^* \Gamma_\rho,\text{for $\sigma \in \R$},
$$
	is conormal to $\Gamma_\rho$. Note that $|\tau|= 2\sigma c^2|t-t_0|$ and $|\xi|= |2\sigma \frac{x-x_0}{|x-x_0|}(|x-x_0|-R)|=2\sigma ||x-x_0|-R|$. So in the $\ext(\Gamma_\rho)$, the covector $\zeta=(\tau,\xi)$ is space-like (i.e. $|\tau| \leq |\xi|$.) Thus, for $\rho>0$, the surfaces $\Gamma_\rho$ will be a family of analytic time-like hypersurfaces.
	
	Let $\sigma=\frac{1}{2}$, $z_0=(t_0,x_0)=(0,0)$, and $\rho>0$ fixed be the smallest one with the property that $\supp f \cap \Gamma_{\rho}\not = \emptyset$. Similar argument as in above example shows that the vector field $f$ can be recovered up to a potential field on $\Gamma_\rho$. The case where $c=0$ corresponds to the classical support theorem for balls.
	\begin{figure}[H]
	\vspace*{5pt}	\includegraphics[scale=0.32]{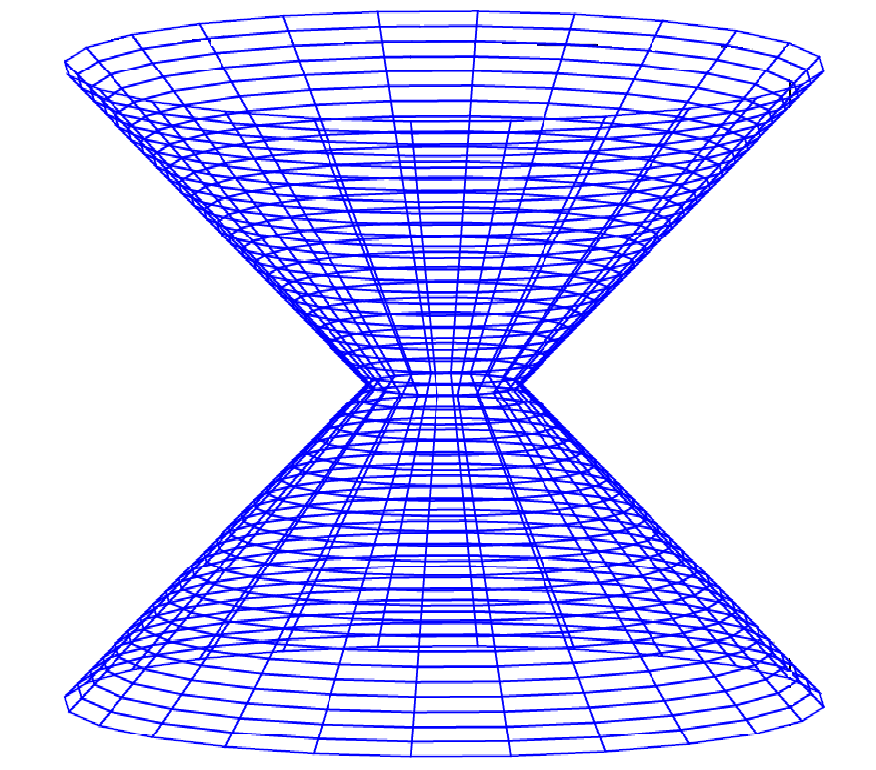}
		\caption{$\Gamma_{0}$ with $0<c<1$.}\vspace*{-8pt}	
	\end{figure}
	Note that on the surface $\Gamma_0$, there is no conormal covector at $t=t_0$ as $|x-x_0|=R$. Being outside of $\Gamma_\rho$ guarantees the existence of conormal covector as $\rho>0$ on $\Gamma_\rho$, see Figure 2.
	\end{example}

	Next example is a partial data case of Example 6.2 for the inverse recovery of a smooth potential field for the hyperbolic Dirichlet-to-Neumann (DN) maps. It is known that, all the integral lines can be extracted from the DN map for hyperbolic (wave) equations, see, e.g., \cite{40, 32, 31, 49, 33, 4, 3}. Our result provides the optimal way of the inverse recovery process up to a smooth potential. A similar result for recovery of the unknown potential can be found in \cite{46}.
\begin{example}
	\normalfont
	Let $f \in \mathcal{E}'(\R^{1+n})$ be so that $\supp f$ expands with a speed less than one, and consider the cylinder $[0,T]\times \bar{\Omega}$ for some $T>0$ and $\Omega \subset \R^n$. In Example 6.2 we showed that the vector field $f$ can be recovered up to a smooth potential field in the exterior of $\Gamma_\rho$. Now consider the surface $\Gamma$ as union of all those exteriors of time-like hypersurfaces for $t\in[0,T]$. This surface includes all light-like lines $\gamma_{x,\theta}= (s, x + s\theta)$, $(z,\theta) \in \R^n \times S^{n-1}$, not intersecting the top and the bottom of the cylinder $[0,T]\times \bar{\Omega}$, see Figure 3.
		\begin{figure}
		\includegraphics[scale=0.45]{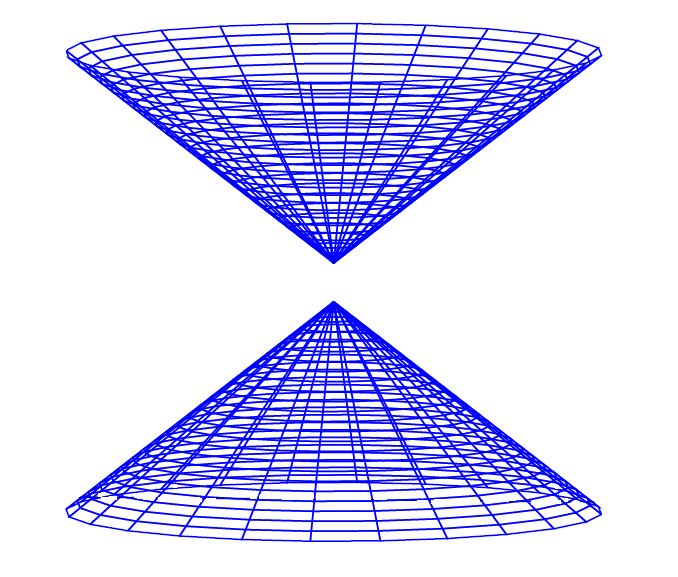}
		\caption{$\Gamma_{0}$ with $0<c< 1$.}
	\end{figure}

	By Theorem 2.1, we can recover $f$ up to $d\phi$, $\phi = 0$ on $[0,T]\times \bar{\Omega}$ in the set covered by those lines for $n\geq3$. As we pointed out on Remark 3.1, for $n = 2$ there are two discrete choices of directions which means that for recovery of $f$ up to a potential $d\phi$ one needs to know $Lf$ along light-like $\gamma_{x,\theta}$ as well as knowing $Lf$ along $\gamma_{x,-\theta}$. Note that our uniqueness results do not require the vector field to be compactly supported in time. Moreover, we are not considering any Cauchy data on circles on top and bottom of the cylinder, which means there is no internal measurement. This is the optimal way one can wish to recover the vector field $f$ in this set. Uniqueness result in this paper and result in \cite{46} (recovery of the unknown potential $q$ in $\Gamma$) generalize the uniqueness results in \cite{40, 32, 31, 49, 33, 4, 3}.
\end{example}

\section*{Acknowledgment.}
	The author would like to express his gratitude for Professor Plamen D Stefanov for suggesting the problem and many useful discussions.

\medskip
Received October  2016; revised December  2017.
\medskip

{\it E-mail address: }srabieni@purdue.edu\\


\begin{thebibliography}{99}
	
\bibitem{1} (MR1852231)
\newblock A. Begmatov,
\newblock A certain inversion problem for the ray transform with incomplete data,
\newblock \emph{Siberian Math. Journal,} \textbf{42} (2001), 428--434.

\bibitem{2} [10.1016/j.jmaa.2016.11.082]
\newblock M. Bellassoued and I. Ben Aicha,
\newblock \doititle{Stable determination outside a cloaking region of two time-dependent coefficients in an hyperbolic equation from Dirichlet to Neumann map,}
\newblock \emph{Journal of Mathematical Analysis and Applications,} \textbf{449} (2017), 46--76, \arXiv{1605.03466}.

\bibitem{3} (MR2852371) [10.3934/ipi.2011.5.745]
\newblock M. Bellassoued and D. Dos Santos Ferreira,
\newblock \doititle{Stability estimates for the anisotropic wave equation from the Dirichlet-to-Neumann map,}
\newblock \emph{Inverse Problem. Imaging}, \textbf{5} (2011), 745--773.

\bibitem{4} (MR3540317)
\newblock I. Ben Aicha,
\newblock Stability estimate for hyperbolic inverse problem with time dependent coefficient,
\newblock \emph{Inverse Problems,} \textbf{31} (2015), 125010, 21 pp, \arXiv{1506.01935}.

\bibitem{5} (MR1178765)
\newblock J. Boman,
\newblock Helgason's support theorem for Radon transforms--a new proof and a generalization,
\newblock In \emph{Mathematical Methods in Tomography (Oberwolfach, 1990)}, volume 1497 of Lecture Notes in Math., pages 1--5. Springer, Berlin, 1991.

\bibitem{6} (MR916130) [10.1215/S0012-7094-87-05547-5]
\newblock J. Boman and E. T. Quinto,
\newblock \doititle{Support theorems for real-analytic Radon transforms,}
\newblock \emph{Duke Math. J.}, \textbf{55} (1987), 943--948.

\bibitem{7} (MR1080733) [10.1090/S0002-9947-1993-1080733-8]
\newblock J. Boman and E. T. Quinto,
\newblock \doititle{Support theorems for Radon transforms on real analytic line complexes in three-space,}
\newblock \emph{Trans. Amer. Math. Soc.}, \textbf{335} (1993), 877--890.

\bibitem{8}
\newblock J. M. Bony,
\newblock \emph{Equivalence des Diverses Notions de Spectre Singulier Analytique,}
\newblock S\`{e}minaire Goulaouic-Schwartz, 1976/77, no.3.

\bibitem{9}
\newblock J. Bros and D. Iagolnitzer,
\newblock \emph{Support Essentiel et Structure Analytique Des Distributions,}
\newblock S\`{e}minaire Goulaouic-Lions-Schwartz, 1975/76, no. 18.

\bibitem{10} (MR2216405) [10.1088/0266-5611/22/2/001]
\newblock A. Denisiuk,
\newblock \doititle{Inversion of the x-ray transform for 3D symmetric tensor fields with sources on a curve,}
\newblock \emph{Inverse Problems}, \textbf{22} (2006), 399--411.

\bibitem{11} (MR2365669) [10.1007/s12220-007-9007-6]
\newblock B. Frigyik, P. Stefanov and G. Uhlmann,
\newblock \doititle{The X-ray transform for a generic family of curves and weights,}
\newblock \emph{J. Geom. Anal.}, \textbf{18} (2008), 89--108.

\bibitem{12} (MR1016420) [10.1215/S0012-7094-89-05811-0]
\newblock A. Greenleaf and G. Uhlmann,
\newblock \doititle{Nonlocal inversion formulas for the X-ray transform,}
\newblock \emph{Duke Math. J.}, \textbf{58} (1989), 205--240.

\bibitem{13} (MR1040963) [10.1016/0022-1236(90)90011-9]
\newblock A. Greenleaf and G. Uhlmann,
\newblock \doititle{Estimates for singular Radon transforms and pseudodifferential operators with singular symbols,}
\newblock \emph{J. Funct. Anal.}, \textbf{89} (1990), 202--232.

\bibitem{14} (MR1108649)
\newblock A. Greenleaf and G. Uhlmann,
\newblock Microlocal techniques in integral geometry,
\newblock In \emph{Integral Geometry and Tomography (Arcata, CA, 1989)}, volume 113 of Contemp. Math., pages 121--135. \emph{Amer. Math. Soc.,} Providence, RI, 1990.

\bibitem{15} (MR812288)
\newblock V. Guillemin,
\newblock On some results of Gel'fand in integral geometry,
\newblock In \emph{Pseudodifferential Operators and Applications (Notre Dame, Ind., 1984)}, volume 43 of Proc. Sympos. Pure Math., pages 149--155. \emph{Amer. Math. Soc.}, Providence, RI, 1985.

\bibitem{16} (MR0516965)
\newblock V. Guillemin and S. Sternberg,
\newblock \emph{Geometric Asymptotics},
\newblock American Mathematical Society, Providence, R.I., 1977.

\bibitem{17} (MR1723736)
\newblock S. Helgason,
\newblock \emph{The Radon Transform,}
\newblock volume 5 of Progress in Mathematics, Birkh\"auser Boston Inc., Boston, MA, second edition, 1999.

\bibitem{18} (MR2592785) [10.3934/ipi.2010.4.111]
\newblock S. Holman and P. Stefanov,
\newblock \doititle{The weighted Doppler transform,}
\newblock \emph{Inverse Problem. Imaging}, \textbf{4} (2010), 111--130.

\bibitem{19} (MR0294849) [10.1002/cpa.3160240505]
\newblock L. H\"ormander,
\newblock \doititle{Uniqueness theorems and wave front sets for solutions of linear differential equations with analytic coefficients,}
\newblock \emph{Comm. Pure Appl. Math.} \textbf{24} (1971), 671--704.

\bibitem{20} (MR2299741) [10.4007/annals.2007.165.567]
\newblock C. E. Kenig, J. Sj\"ostrand and G. Uhlmann,
\newblock \doititle{The Calder\'{o}n problem with partial data,}
\newblock \emph{Ann. of Math. (2)}, \textbf{165} (2007), 567--591.

\bibitem{21} (MR3578027) [10.1137/16M1076708]
\newblock Y. Kian,
\newblock \doititle{Recovery of time-dependent damping coefficients and potentials appearing in wave equation from partial data,}
\newblock \emph{SIAM J. Math. Anal.,} \textbf{48} (2016), 4021--4046, \arXiv{1603.09600}.

\bibitem{22} (MR2549942) [10.1007/s00041-009-9061-5]
\newblock V. P. Krishnan,
\newblock \doititle{A support theorem for the geodesic ray transform on functions},
\newblock \emph{J. Fourier Anal. Appl.}, \textbf{15} (2009), 515--520.

\bibitem{23} (MR2557914) [10.3934/ipi.2009.3.453]
\newblock V. P. Krishnan and P. Stefanov,
\newblock \doititle{A support theorem for the geodesic ray transform of symmetric tensor fields,}
\newblock \emph{Inverse Probl. Imaging}, \textbf{3} (2009), 453--464.

\bibitem{24} [10.1007/s00220-017-3029-0]
\newblock M. Lassas, L. Oksanen, P. Stefanov and G. Uhlmann,
\newblock \doititle{On the inverse problem of finding cosmic strings and other topological defects,}
\newblock \emph{Communications in Mathematical Physics,} (2017), 1--27, \arXiv{1505.03123}.

\bibitem{25} (MR1291640)
\newblock R. B. Melrose,
\newblock Spectral and scattering theory for the Laplacian on asymptotically Euclidian spaces,
\newblock \emph{Inverse Probl. In Spectral and scattering theory (Sanda, 1992),} volume 161 of Lecture Notes in Pure and Appl. Math., pages 85--30. Dekker, New York, 1994.

\bibitem{26} (MR3169781) [10.1080/03605302.2013.843429]
\newblock C. Montalto,
\newblock \doititle{Stable determination of a simple metric, a covector field and a potential from the hyperbolic Dirichlet-to Neumann map,}
\newblock \emph{Partial Differential Equations}, \textbf{39} (2014), 120--145.

\bibitem{27} (MR3069117) [10.1007/s00222-012-0432-1]
\newblock G. P. Paternain, M. Salo and G. Uhlmann,
\newblock \doititle{Tensor tomography on simple surfaces,}
\newblock \arXiv{1109.0505}, \emph{Invent. Math.}, \textbf{193} (2013), 229--247.

\bibitem{28} (MR2153407) [10.4007/annals.2005.161.1093]
\newblock L. Pestov and G. Uhlmann,
\newblock \doititle{Two dimensional simple compact manifolds with boundary are boundary rigid,}
\newblock \emph{Ann. Math.}, \textbf{161} (2005), 1093--1110.

\bibitem{29} (MR0244912)
\newblock A. Z. Petrov,
\newblock \emph{Einstein Spaces. Translated from the Russian by R. F. Kelleher. Translation edited by J. Woodrow,}
\newblock Pergamon Press, Oxford-Edinburgh-New York, 1969.

\bibitem{30} (MR1135080) [10.1090/S0002-9939-1993-1135080-8]
\newblock E. T. Quinto,
\newblock \doititle{Real analytic Radon transforms on Rank one symmetric spaces,}
\newblock \emph{Proc. Math. Soc.}, \textbf{117} (1993), 179--186.

\bibitem{31} (MR1102606) [10.1016/0022-247X(91)90391-C]
\newblock A. G. Ramm and Rakesh,
\newblock \doititle{Property C and an inverse problem for a hyperbolic equation,}
\newblock \emph{J. Math. Anal. Appl.}, \textbf{156} (1991), 209--219.

\bibitem{32} (MR1086818) [10.1007/BF02571330]
\newblock A. G. Ramm and J. Sj\"ostrand,
\newblock \doititle{An inverse problem of the wave equation,}
\newblock \emph{Math. Z.}, \textbf{206} (1991), 119--130.

\bibitem{33} (MR3104931)
\newblock R. Salazar,
\newblock Determination of time-dependent coefficients for a hyperbolic inverse problem,
\newblock \emph{Inverse Problems,} \textbf{29} (2013), 095015, 17pp.

\bibitem{34} (MR3274605)
\newblock R. Salazar,
\newblock Stability estimate for the relativistic Schroedinger equation with time-dependent vector potentials,
\newblock \emph{Inverse Problems}, \textbf{30} (2014), 105005, 18pp.

\bibitem{35}
\newblock M. Sato,
\newblock \emph{Hyperfunctions and Partial Differential Equations,}
\newblock Proc. Int. Conf. Funct. Anal. Tokyo 1969, 91--4.

\bibitem{36} (MR0420735)
\newblock M. Sato, T. Kawai and M. Kashiwara,
\newblock Microfunctions and pseudo-differential equations. In Hyperfunctions and pseudo-differential equations (Proc. Conf., Katata, 1971; dedicated to the memory of Andr\'e Martineau),
\newblock \emph{Lecture Notes in Math.}, Springer, Berlin, \textbf{287} (1973), 265--529.

\bibitem{37} (MR1374572)
\newblock V. A. Sharafutdinov,
\newblock \emph{Integral Geometry of Tensor Fields, Inverse and Ill-Posed Problems Series,}
\newblock VSP, Utrecht, 1994.

\bibitem{38} (MR699623)
\newblock J. Sj\"ostrand,
\newblock Singularit\'es analytiques microlocales,
\newblock In \emph{ Ast\'erisque, 95, Soc. Math. France,} Paris, volume 95 of Ast\'erisque, (1982), 1--166.

\bibitem{39} (MR3589324)
\newblock P. Stefanov,
\newblock Support theorems for the light ray transform on analytic lorentzian manifolds,
\newblock \emph{Proc. Amer. Math. Soc.,} \textbf{145} (2017), 1259--1274.

\bibitem{40} (MR1004174) [10.1007/BF01215158]
\newblock P. Stefanov,
\newblock \doititle{Uniqueness of the multi-dimensional inverse scattering problem for time dependent potentials,}
\newblock \emph{Math. Z.}, \textbf{201} (1989), 541--559.

\bibitem{41}
\newblock P. Stefanov and G. Uhlmann,
\newblock \emph{Microlocal Analysis and Integral Geometry,}
\newblock Book in Progress.

\bibitem{42} (MR2068966) [10.1215/S0012-7094-04-12332-2]
\newblock P. Stefanov and G. Uhlmann,
\newblock \doititle{Stability estimates for the X-ray transform of tensor fields and boundary rigidity,}
\newblock \emph{Duke Math. J.}, \textbf{123} (2004), 445--467.

\bibitem{43} (MR2163868) [10.1090/S0894-0347-05-00494-7]
\newblock P. Stefanov and G. Uhlmann,
\newblock \doititle{Boundary rigidity and stability for generic simple metrics,}
\newblock \emph{J. Amer. Math. Soc.}, \textbf{18} (2005), 975--1003.

\bibitem{44} (MR2382148) [10.1353/ajm.2008.0003]
\newblock P. Stefanov and G. Uhlmann,
\newblock \doititle{Integral geometry of tensor fields on a class of non-simple Riemannian manifolds,}
\newblock \emph{Amer. J. Math.}, \textbf{130} (2008), 239--268.

\bibitem{45} (MR2970707) [10.2140/apde.2012.5.219]
\newblock P. Stefanov and G. Uhlmann,
\newblock \doititle{The geodesic X-ray transform with fold caustics,}
\newblock \emph{Anal. PDE}, \textbf{5} (2012), 219--260.

\bibitem{46}
\newblock P. Stefanov and Y. Yang,
\newblock \emph{The Inverse Problem for The Dirichket-to-Neumann Map on Lorentzian Manifolds,}
\newblock 2016.

\bibitem{47}
\newblock F. Tr\`{e}ves,
\newblock \emph{Introduction to Pseudodifferential and Fourier Integral Operators, Vol. 1. Pseudodifferential Operators,}
\newblock The University Series in Mathematics. Plenum Press, New York, 1980.

\bibitem{48} (MR3514959) [10.1007/s00222-015-0631-7]
\newblock G. Uhlmann and A. Vasy,
\newblock \doititle{The inverse problem for the local geodesic ray transform,}
\newblock \emph{Inventiones Mathematicae}, \textbf{205} (2016), 83--120.

\bibitem{49} (MR3259552) [10.1080/03605302.2014.930486]
\newblock A. Waters,
\newblock \doititle{Stable determination of X-Ray transforms of time dependent potentials from partial boundary data,}
\newblock \emph{Comm. Partial Differential Equations}, \textbf{39} (2014), 2169--2197.

\bibitem{50}
\newblock A. Waters and R. Salazar,
\newblock \emph{Global stability for time dependent X-ray transforms on simple manifolds and applications,}
\newblock \arXiv{1311.1591}, 2013.

	

\end{thebibliography}
\end{document}